\newcommand{\ie}{i.e.\ }
\newcommand{\eg}{e.g.\ }
\newcommand{\inv}{^{-1}}
\newcommand{\F}{\mathbb F}
\newcommand{\D}{\mathbb D}
\newcommand{\E}{\mathbb E}
\newcommand{\ul}{\text{\smash{\raisebox{-1ex}{\scalebox{1.5}{$\ulcorner$}}}}}
\DeclareMathOperator{\dia}{dia}
\DeclareMathOperator{\Ho}{Ho}
\newcommand{\op}{^\text{op}}
\newcommand{\pr}{\text{pr}}
\newcommand{\swtrans}{\mathbin{\rotatebox[origin=c]{225}{$\Rightarrow$}}}
\newcommand{\netrans}{\mathbin{\rotatebox[origin=c]{45}{$\Rightarrow$}}}
\newcommand{\sU}{\mathscr U}
\newcommand{\cE}{\mathcal E}
\newcommand{\cW}{\mathcal W}
\newcommand{\cA}{\mathcal A}
\newcommand{\cC}{\mathcal C}
\newcommand{\cM}{\mathcal M}
\newcommand{\Cat}{\mathbf{Cat}}
\newcommand{\CAT}{\mathbf{CAT}}
\newcommand{\PDer}{\mathbf{PDer}}
\newcommand{\Der}{\mathbf{Der}}
\DeclareMathOperator{\Hom}{Hom}
\DeclareMathOperator{\Fun}{Fun}
\DeclareMathOperator{\id}{id}
\DeclareMathOperator{\colim}{colim}
\newcommand{\Dia}{\mathbf{Dia}}
\newcommand{\Dirf}{\mathbf{Dir_f}}
\renewcommand{\phi}{\varphi}
\numberwithin{equation}{section}
\numberwithin{figure}{section}
\theoremstyle{definition}
\newtheorem{theorem}[equation]{Theorem}
\newtheorem{prop}[equation]{Proposition}
\newtheorem{defn}[equation]{Definition}
\newtheorem{lemma}[equation]{Lemma}
\newtheorem{remark}[equation]{Remark}
\newtheorem{cor}[equation]{Corollary}
\newtheorem{notn}[equation]{Notation}
\newtheorem{ex}[equation]{Example}
\begin{document}

\title{The theory of half derivators}
\author{Ian Coley}
\begin{abstract}
We review the theory of derivators from the ground up, defining new classes of derivators which were originally motivated by derivator K-theory. We prove that many old arguments that relied on homotopical bicompleteness hold also for one-sided \emph{half derivators} on arbitrary diagram categories. We end by defining the maximal domain for a K-theory of derivators generalising Waldhausen K-theory.
\end{abstract}

\maketitle

\tableofcontents

\section{Introduction}\label{sec:intro}

The theory of derivators was developed initially (under different names) by Heller in \cite{Hel88},  Grothendieck in \cite{Gro90}, and (in the triangulated setting) Franke in \cite{Fra96}. In brief, a derivator represents an abstract bicomplete homotopy theory; we attach the adjective \emph{triangulated} to a derivator when it represents a stable (bicomplete) homotopy theory. Unlike the world of (stable) $(\infty,1)$-categories, the fundamental proof techniques used in the theory of derivators have a strong diagrammatic flavour. Indeed, derivators have been studied by Franke and Garkusha under the name `systems of diagram categories'. Moritz Rahn (n\'e Groth) in \cite{Gro13} has given an excellent exposition of the field and his upcoming monograph \cite{Gro19} should provide an even more expansive account.

However, Rahn's focus (and the focus of most authors) is on the stable/triangulated setting, where all limits and colimits are assumed to exist and satisfy fairly strong regularity properties. These are not the only derivators worth studying; Muro-Raptis in \cite{MurRap17} proved that derivator K-theory could be defined on a derivator merely admitting a zero object on its base and (homotopy) pushouts, a situation analogous to Waldhausen categories. In preparing his doctoral work on derivator K-theory, the author was unable to apply verbatim many existing proofs for the (bicomplete) theory of derivators to this broader class of objects. Many of Rahn's proofs on colimit-preservation or phenomena that felt independent of the existence of right Kan extensions still used the full structure of a derivator.

The first goal of this paper is to give new proofs in many cases in order to develop a more robust theory of \emph{half derivators}, \ie ones representing homotopy theories that may not be bicomplete, but still admit many limits or colimits. We develop the theory from the ground up in full detail to show precisely which results apply to half derivators and which require the strength of a (full) derivator. We unify results written using different conventions (and languages) to give the most streamlined proofs possible. We hope that this thorough accounting makes this paper a resource to those hoping to learn about derivators and those already working in the theory.

In Sections~\ref{sec:prelim} and \ref{sec:CAT}, we set up the 2-categorical context in which we are working. The calculus of mates is an essential proof technique, which we recall in Section~\ref{sec:calculusofmates}. We define (pre)derivators in Section~\ref{sec:preders} with an emphasis on the half derivator story. In Section~\ref{sec:2catofpreders} we define various 2-categories of derivators for examination, and in Section~\ref{sec:homotopyexactsquares} we prove some general results about all half derivators.

The second goal of this paper is to develop in detail the particular theory of left pointed derivators for the purposes of studying derivator K-theory in \cite{Col20a}. The salient features of (bicomplete) pointed derivators are recalled in Section~\ref{sec:pointedders}, and we prove which features persist in half pointed derivators in the final Section~\ref{sec:halfpointed}. We conclude with some final comments about the specific category of half derivators appropriate for studying algebraic K-theory. These results previously appeared together with \cite{Col20a} in the author's doctoral thesis \cite{Col19b}, and all acknowledgements therein apply here. In particular, thanks are due to the UCLA derivator seminar: Paul Balmer, Kevin Arlin (n\'e Carlson), Ioannis Lagkas, and John Zhang. Finally, we thank the anonymous referee for their careful reading and suggestions that improved the flow of this paper. 

\section{2-categorical preliminaries}\label{sec:prelim}

Before giving the definition of a derivator, we will need to set up the categorical context in which we are working. We will assume the reader is familiar with basic definitions in category theory, and if not is directed to~\cite{Mac71} for a good introduction.

For a word on set-theoretic concerns before continuing: recall that a \emph{class} is a collection that may not be a set. It is sometimes beneficial in higher category theory to fix a Grothendieck universe $\sU$ and thereby formalize what it means to be `not a set'. This was first developed in~\cite[Expos\'e~I]{SGA4}, and a more modern discussion can be found in, e.g.,~\cite{Shu08}. Because we will not have to address seriously any issues of size in this paper, we fix the following definitions:
\begin{defn}
A category is called \emph{small} if its class of objects is actually a set. A category is \emph{essentially small} if it is equivalent to a small category. Equivalently, a category is essentially small if the collection of isomorphism classes of its objects is a set.
\end{defn}

This being settled, we can define a 2-category and fix some notation.

\begin{defn}
A \emph{2-category} $\mathbf{C}$ consists of a class of objects, and for any two objects $J,K\in\mathbf{C}$, a category of morphisms $\mathbf{C}(J,K)$. Additionally, for any $I,J,K\in\mathbf C$, the composition map
\begin{equation*}
\mathbf{C}(J,K)\times\mathbf C(I,J)\to\mathbf C(I,K)
\end{equation*}
must be a functor. For each $J\in\mathbf{C}$, there is an identity morphism $\id_J\in\mathbf{C}(J,J)$ which acts as expected with respect to composition. We will use different notation for the category of morphisms depending on the 2-category in question.
\end{defn}
One can think of a 2-category as a category enriched in 1-categories, so that the morphism sets in $\mathbf{C}$ actually have the structure of a (small) category. We have three collections in a 2-category: objects, morphisms, and morphisms-between-morphisms, which we will call \emph{2-morphisms}. The canonical example of a 2-category is the 2-category of 1-categories.

\begin{ex}
Define the 2-category $\Cat$ as follows. The objects are small categories; the morphisms are functors; and the 2-morphisms are natural transformations. By way of general notation, we will denote small categories by $J,K$, or other capital Roman letters. Functors between small categories will be $u,v\colon J\to K$, or other lowercase Roman letters. Natural transformations will be denoted $\alpha\colon u\Rightarrow v$ or other lowercase Greek letters. We will try to reserve $a,b,c,x,y$ for objects of a particular category $K\in\Cat$ and $f\colon x\to y$ for maps in $K$, which we will continue to call `maps' rather than `morphisms'.

We will also consider the 2-category $\CAT$, defined analogously to above but with objects all categories, which may or may not be small. There is a slight set-theoretic issue in that the functor-categories here might fail to be small, but we will we ignore this inconvenience.
\end{ex}

Having defined a 2-category, we can now define maps between 2-categories. These come in a variety of flavors, but we will give the one required for our purposes.
\begin{defn}
Let $\mathbf{C},\mathbf{D}$ be 2-categories. A \emph{strict 2-functor} $\F\colon \mathbf{C}\to\mathbf{D}$ consists of the following data: a functor $\F\colon \mathbf{C}\to\mathbf{D}$ on the underlying 1-categories, and for any $J,K\in\mathbf{C}$, a functor $\F_{J,K}\colon\mathbf{C}(J,K)\to\mathbf{D}(\F(J),\F(K))$. That is, $\F$ respects composition of both 1- and 2-morphisms strictly, not just up to natural isomorphism or some weaker notion.
\end{defn}

We have many competing notions of subcategory in the theory of 2-categories, but the one we will need is closest to the case of 1-categories.
\begin{defn}
A \emph{full sub-2-category} $\mathbf{C}$ of a 2-category $\mathbf{D}$ is a subclass of objects $\mathbf{C}\subset\mathbf{D}$ with the choice of morphism categories $\mathbf{C}(J,K)=\mathbf{D}(J,K)$ for all $J,K\in\mathbf{C}$.
\end{defn}

\begin{ex}
As described above, $\Cat$ is a full sub-2-category of $\CAT$ on the subclass of small categories.
\end{ex}

\section{Particulars in $\CAT$}\label{sec:CAT}

We will need a few constructions which occur inside of $\CAT$ for the axioms of a derivator. In fact, the same constructions may be made within an arbitrary 2-category, but all 2-categories in this paper will be sub-2-categories of $\CAT$, so there is no need to strain one's intuition too much. The generalizations will be left to the interested reader with the help of~\cite[\S 7]{Bor94a}.

A commutative square in the 2-category $\CAT$ has more data than a usual commutative square. We write such squares
\begin{equation}
\vcenter{\xymatrix{
A\ar[r]^-{v}\ar[d]_-{p}&B\ar[d]^-{q}\ar@{}[dl]|\swtrans\ar@{}[dl]<-1.25ex>|\alpha\\
C\ar[r]_-{w}&D
}}
\end{equation}
The data is as follows: four categories $A,B,C,D$; four functors $p,q,v,w$; and a natural transformation $\alpha\colon q\circ v\Rightarrow w\circ p$. Thus the square does not commute per se, but there is a natural map $\alpha_a\colon q(v(a))\to w(p(a))$ in $D$ for every object $a\in A$. Of course, if $\alpha$ is $\id_{qv}$ or a natural isomorphism, then this square commutes in the 1-categorical sense as well as we can expect.

We could also have the natural transformation point the other way, in which case we write
\begin{equation}
\vcenter{\xymatrix{
A\ar[r]^-{v}\ar[d]_-{p}&B\ar[d]^-{q}\ar@{}[dl]|\netrans\ar@{}[dl]<-1.25ex>|\beta\\
C\ar[r]_-{w}&D
}}
\end{equation}
and have for every $a\in A$ a natural map $\beta_a\colon w(p(a))\to q(v(a))$ in $D$. Of course this is the same as the above up to flipping the square over the line $\overline{AD}$, but in the case that we have fixed the data of the outside of the square but are varying the natural transformation, we will write it in this way.

There is a particular type of square that will arise repeatedly in this paper, so we describe it now for future reference.
\begin{defn}
Let $u\colon J\to K$ be any functor, and let $k\in K$ be any object. We define the \emph{comma category} $(u/k)$ as follows: its objects are pairs $j\in J$ with a map~$f\colon u(j)\to k$, and a map $(j,f)\to (j',f')$ in the comma category is a\linebreak map $g\colon j\to j'$ in $J$ making the obvious diagram commute:
\begin{equation}\label{dia:commamorphism}
\vcenter{\xymatrix@C=1em{
u(j)\ar[rr]^-{u(g)}\ar[dr]_(0.4){f}&&u(j')\ar[dl]^(0.4){f'}\\
&k
}}
\end{equation}
One should read the notation $(u/k)$ as `$u$ over $k$', which reminds the reader that maps in $(u/k)$ take place over the identity on $k$.

There is an analogous category $(k/u)$ (`$u$ under $k$') where the objects are instead pairs $j\in J$ with $f\colon k\to u(j)$. 
\end{defn}

These categories fit into a canonical commutative square
\begin{equation}\label{dia:commasquare}
\vcenter{\xymatrix{
(u/k)\ar[r]^-{\pr}\ar[d]_-{\pi_{(u/k)}}&J\ar[d]^-u\ar@{}[dl]|\swtrans\ar@{}[dl]<-1.25ex>|\alpha\\
e\ar[r]_k&K
}}
\end{equation}
Here, $e$ denotes the final category with one object and one (identity) morphism. The functor $k\colon e\to K$ classifies the object $k\in K$. The functor~${\pi_{(u/k)}\colon (u/k)\to e}$ is the unique functor which sends all objects to the only object in $e$ and all maps to the only map in $e$. Finally, $\pr\colon (u/k)\to J$ (read: `projection') is the forgetful functor~$(j,f\colon u(j)\to k)\mapsto j$.

Let $(j,f)\in (u/k)$. The composition around the top and right gives an equality~$u(\pr(j,f))=u(j)$, and the composition around the left and bottom gives an equality~$k(\pi_{(u/k)}(j,f))=k$. Thus the natural transformation $\alpha_{(j,f)}\colon u(j)\to k$ has an obvious candidate: the map $f\colon u(j)\to k$ that was part of the original data of $(j,f)$. If we make this a definition, then $\alpha$ is indeed a natural transformation and we have a commutative square in $\CAT$.

There is an analogous construction for the comma category $(k/u)$:
\begin{equation*}
\vcenter{\xymatrix{
(k/u)\ar[r]^-{\pr}\ar[d]_-{\pi_{(u/k)}}&J\ar[d]^-u\ar@{}[dl]|\netrans\ar@{}[dl]<-1.25ex>|\beta\\
e\ar[r]_k&K
}}
\end{equation*}

The category $(u/k)$ and the data of Diagram~\ref{dia:commasquare} satisfies a universal property in the 2-category $\CAT$: they are a kind of pullback, in the sense that any choice of $T\in\CAT$ with two functors and a natural transformation making the square below commute
\begin{equation*}
\vcenter{\xymatrix{
T\ar[r]\ar[d]&J\ar[d]^-u\ar@{}[dl]|\swtrans\\
e\ar[r]_k&K
}}
\end{equation*}
has a unique factorization through the comma category which respects both the functors and the natural transformations. We therefore name Diagram~\ref{dia:commasquare} an \emph{oriented pullback square}.

We can also generalize the above construction to obtain the oriented pullback of any cospan of categories
\begin{equation}\label{dia:cospan}
\vcenter{\xymatrix{
&J_1\ar[d]^-{u_1}\\
J_2\ar[r]_-{u_2}&K
}}
\end{equation}
defining the comma category $(u_1/u_2)$ or $(u_2/u_1)$. The objects in $(u_1/u_2)$ are\linebreak triples $j_1\in J_1$, $j_2\in J_2$, and a map $f\colon u_1(j_1)\to u_2(j_2)$ in $K$. The maps in $(u_1/u_2)$ come from maps $g_1\colon j_1\to j'_1$ and $g_2\colon j_2\to j'_2$ in $J_1$ and $J_2$ respectively making the requisite square commute:
\begin{equation*}
\vcenter{\xymatrix@C=3em{
u_1(j_1)\ar[r]^-{u_1(g_1)}\ar[d]_-f&u_1(j'_1)\ar[d]^-{f'}\\
u_2(j_2)\ar[r]_-{u_2(g_2)}&u_2(j'_2)
}}
\end{equation*}

\begin{remark}\label{rk:laxpullback}
While we have previously referred to these as \emph{lax pullback squares}, this is not correct. Indeed, according to the nLab `comma objects are often misleadingly called lax pullbacks'. The lax pullback square associated to Diagram~\ref{dia:cospan} is a (weak) 2-limit in the 2-category $\Cat$, which gives a unique (up to equivalence) universal object. Since our comma categories have two non-equivalent orientations, the process of producing them cannot depend only on the data of the cospan. It may be that the oriented pullback square is a sort of limit in a double category, but we do not pursue this line of inquiry further.
\end{remark}

\section{The calculus of mates}\label{sec:calculusofmates}

There is one more phenomenon to explore before defining a derivator. A thorough but older (\ie typewritten and pre-\LaTeX) reference for calculus of mates can be found in~\cite{KelStr74}.

Suppose that we are given the following commutative square in $\CAT$:
\begin{equation}\label{dia:matesquare}
\vcenter{\xymatrix{
J_1\ar[r]^-{v}\ar[d]_-{u_1}&J_2\ar[d]^-{u_2}\ar@{}[dl]|\swtrans\ar@{}[dl]<-1.25ex>|\alpha\\
K_1\ar[r]_-{w}&K_2
}}
\end{equation}
such that $u_1$ and $u_2$ admit right adjoints $r_1$ and $r_2$, respectively. Then we can extend the above picture:
\begin{equation*}
\vcenter{\xymatrix{
K_1\ar@(d,l)[dr]_-{=}\ar[r]^-{r_1}&J_1\ar[r]^-{v}\ar[d]_-{u_1}\ar@{}[dl]|\swtrans\ar@{}[dl]<-1.25ex>|{\varepsilon_1}&J_2\ar[d]_-{u_2}\ar@(r,u)[dr]^-=\ar@{}[dl]|\swtrans\ar@{}[dl]<-1.25ex>|\alpha&{}\ar@{}[dl]|\swtrans\ar@{}[dl]<-1.25ex>|{\eta_2}\\
&K_1\ar[r]_-{w}&K_2\ar[r]_-{r_2}&J_2
}}
\end{equation*}
where $\varepsilon_1$ and $\eta_2$ are the counit and unit of the respective adjunctions. In total, this gives us a natural transformation $v\circ r_1\Rightarrow r_2\circ w$ which we call the \emph{right mate of $\alpha$} and denote $\alpha_\ast$.

Similarly, if we have the other flavor of commutative square and $u_1,u_2$ admit left adjoints $\ell_1,\ell_2$, we obtain
\begin{equation*}
\vcenter{\xymatrix{
K_1\ar@(d,l)[dr]_-{=}\ar[r]^-{\ell_1}&J_1\ar[r]^-{v}\ar[d]_-{u_1}\ar@{}[dl]|\netrans\ar@{}[dl]<-1.5ex>|{\eta_1}&J_2\ar[d]_-{u_2}\ar@(r,u)[dr]^-=\ar@{}[dl]|\netrans\ar@{}[dl]<-1.5ex>|\beta&{}\ar@{}[dl]|\netrans\ar@{}[dl]<-1.5ex>|{\varepsilon_2}\\
&K_1\ar[r]_-{w}&K_2\ar[r]_-{\ell_2}&J_2
}}
\end{equation*}
to construct $\beta_!\colon \ell_2\circ w\Rightarrow v\circ \ell_1$.

In the situation of Diagram~\ref{dia:matesquare}, if $u_1,u_2$ admit right adjoints and $v,w$ admit left adjoints, then it makes sense to talk about both $\alpha_!$ and $\alpha_\ast$. A fair question is the relationship between these two mates, which we will answer shortly.

For one more piece of setup: consider a commutative diagram comprised of two squares
\begin{equation*}
\vcenter{\xymatrix{
J_1\ar[r]^-{v_1}\ar[d]_-{u_1}&J_2\ar[d]_-{u_2}\ar@{}[dl]|\swtrans\ar@{}[dl]<-1.25ex>|{\alpha_1}\ar[r]^-{v_2}&J_3\ar[d]^-{u_3}\ar@{}[dl]|\swtrans\ar@{}[dl]<-1.25ex>|{\alpha_2}\\
K_1\ar[r]_-{w_1}&K_2\ar[r]_-{w_2}&K_3
}}
\end{equation*}
We can take composite of these natural transformations (called their \emph{pasting})
\begin{equation*}
\alpha_2\odot\alpha_1\colon u_3\circ v_2\circ v_1\Rightarrow w_2\circ w_1\circ u_1
\end{equation*}
and take its left/right mate or look at the mates one at a time.

\begin{prop}[\eg Lemma~1.14,~\cite{Gro13}]\label{prop:calculusofmates}\ \\\vspace{-1em}
\begin{enumerate}
\item The calculus of mates is compatible with pasting. That is,\linebreak $(\alpha_2\odot\alpha_1)_!=(\alpha_2)_!\odot(\alpha_1)_!$ and similar for the right mates.
\item The different formations of mates are inverse to each other. That is,\linebreak $\alpha=(\alpha_!)_\ast = (\alpha_\ast)_!$ when $\alpha$ admits both a left and right mate.
\item In the case that $\alpha$ admits both a left and a right mate, then $\alpha_!$ is a natural isomorphism if and only if $\alpha_\ast$ is a natural isomorphism. 
\end{enumerate}
\end{prop}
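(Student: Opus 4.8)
The plan is to prove (1) first and to deduce (2) and (3) from it, since (1) is the only part that requires a genuine computation. For (1), I would paste the two squares into one wide diagram and, in the left-mate case, adjoin the unit 2-cells $\eta_i\colon \id \Rightarrow u_i\ell_i$ and counit 2-cells $\varepsilon_i\colon \ell_i u_i \Rightarrow \id$ of all three adjunctions $\ell_i \dashv u_i$ ($i=1,2,3$). Both $(\alpha_2\odot\alpha_1)_!$ and $(\alpha_2)_!\odot(\alpha_1)_!$ can then be written out as explicit vertical pastings of whiskered copies of $\alpha_1$, $\alpha_2$, and the (co)units; the two expressions differ only by an inserted zig-zag $\ell_2 \Rightarrow \ell_2 u_2\ell_2 \Rightarrow \ell_2$ in the middle, which the triangle identity for $\ell_2\dashv u_2$ collapses to the identity. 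I would carry this out in detail for the left mates and obtain the right-mate statement by reversing every 2-cell (equivalently, by passing to opposite categories).

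For (2), I would apply (1) — together with the trivial fact that the mate of an identity 2-cell across a square with an identity vertical leg is again an identity — to the pasting that defines $(\alpha_!)_\ast$: it telescopes, after two uses of the triangle identities, back to $\alpha$, and symmetrically for $(\alpha_\ast)_!$. This simultaneously shows that forming left mates and forming right mates are mutually inverse bijections between $\Hom(u_2v, wu_1)$ and the corresponding hom-set of 2-cells between the adjoint functors.

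For (3), write $\ell_v\dashv v$ and $\ell_w\dashv w$, so that the left mate of $\alpha$ (obtained by transposing Diagram~\ref{dia:matesquare}) is a 2-cell $\alpha_!\colon \ell_w u_2 \Rightarrow u_1\ell_v$ between functors $J_2\to K_1$. The key observation is that $\alpha_!$ and $\alpha_\ast$ are mates of \emph{each other}: composing adjunctions gives $\ell_w u_2 \dashv r_2 w$ and $u_1\ell_v \dashv v r_1$, and by compatibility of the mate construction with composition of adjunctions — the companion of part (1), proved by the same triangle-identity manipulation — the mate of $\alpha_!$ across these composite adjunctions is precisely $\alpha_\ast\colon vr_1 \Rightarrow r_2 w$. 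It therefore suffices to record the elementary fact that, given adjunctions $f\dashv f'$ and $g\dashv g'$ and a 2-cell $\phi\colon f\Rightarrow g$, the transformation $\phi$ is a natural isomorphism iff its mate $\phi^\flat\colon g'\Rightarrow f'$ is: if $\phi$ is invertible then $\phi^{-1}$ has a mate $(\phi^{-1})^\flat$, and compatibility of the mate correspondence with vertical composition gives $\phi^\flat\cdot(\phi^{-1})^\flat = (\phi\cdot\phi^{-1})^\flat = \id$ and likewise on the other side, so $\phi^\flat$ is invertible; the converse is the same argument, using that the mate of the mate is the original (the 2-cell version of (2)). Equivalently one can finish by Yoneda: the adjunction isomorphisms identify $\Hom(\ell_w u_2\,x, y)$ with $\Hom(x, r_2 w\,y)$ and $\Hom(u_1\ell_v\,x, y)$ with $\Hom(x, vr_1\,y)$ in a way that carries precomposition with $(\alpha_!)_x$ to postcomposition with $(\alpha_\ast)_y$, so $\alpha_!$ is a pointwise isomorphism iff $\alpha_\ast$ is.

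The real work is concentrated in the pasting computation for (1); once it is available, (2) and (3) are bookkeeping with the triangle identities. If instead (1) and (2) are imported from the literature, the crux of (3) becomes the identification of $\alpha_\ast$ as the composite-adjunction mate of $\alpha_!$ — that is, checking that the several unit/counit insertions defining the composite mate reassemble, via the triangle identities, into the single pair of insertions that define $\alpha_\ast$.
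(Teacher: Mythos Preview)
The paper does not actually prove this proposition; it is stated with a citation to \cite[Lemma~1.14]{Gro13} and used as a black box thereafter. (The only hint toward a proof appears later, in the argument for Lemma~\ref{lemma:horzhomotopyexactcancel}, where the paper remarks that the triangle-identity collapse of $p^\ast\Rightarrow p^\ast p_! p^\ast\Rightarrow p^\ast$ is ``the key ingredient in the proof of the compatibility of the calculus of mates with pasting''.) So there is nothing to compare against.

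That said, your outline is correct and is the standard argument. Your plan for (1) is exactly the computation the paper alludes to: the two pastings differ by a unit--counit zig-zag on the middle adjunction, killed by a triangle identity. Your derivation of (2) from (1) is fine. For (3), your route via composite adjunctions is legitimate but slightly roundabout: the verification that the mate of $\alpha_!$ across $\ell_w u_2\dashv r_2 w$ and $u_1\ell_v\dashv v r_1$ really equals $\alpha_\ast$ is another triangle-identity bookkeeping exercise of the same flavor as (1), and you correctly flag it as the crux. Your alternative Yoneda argument at the end is the cleanest way to finish (3) and avoids that bookkeeping entirely; I would lead with it rather than present it as an afterthought.
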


Once we define the axioms of a derivator, we will use all of these properties in order to study some first consequences of those axioms. As a remark on notation, henceforth we will usually not use $\circ$ when writing a composition of functors and just write $vu$ for $v\circ u$.

\section{Prederivators and derivators}\label{sec:preders}

\begin{defn}
A \emph{prederivator} is a strict 2-functor $\D\colon \Cat\op\to\CAT$, where $\text{op}$ reverses only the 1-morphisms and leaves the 2-morphisms alone.
\end{defn}

By way of notation, for a morphism $u\colon J\to K$ in $\Cat$ we denote by $u^\ast$ the functor $\D(u)\colon \D(K)\to~\D(J)$ in $\CAT$, and for $\alpha\colon u\Rightarrow v$ in $\Cat$ we denote by~$\alpha^\ast$ the natural transformation $\D(\alpha)\colon u^\ast\Rightarrow v^\ast$. Composition is respected strictly, so that $(v u)^\ast = u^\ast v^\ast$ and $(\alpha\odot\beta)^\ast=\alpha^\ast\odot\beta^\ast$. Identities are also preserved, so that $(\id_J)^\ast=\id_{\D(J)}$ and $(\id_u)^\ast=\id_{u^\ast}$.

\begin{ex}\label{ex:prederivators}
For the following examples, let $u\colon J\to K$ be a functor in $\Cat$.
\begin{enumerate}
\item Let $\cC\in\CAT$ be any category. Define the prederivator $\D_\cC$ via the Yoneda embedding:
\begin{equation*}
K\mapsto \Fun(K,\cC),\quad u\colon J\to K\,\mapsto u^\ast\colon \Fun(K,\cC)\to \Fun(J,\cC)
\end{equation*}
with the action on natural transformations obvious. Here the map $u^\ast$ is precomposition with $u$, and for this reason we usually refer to $u^\ast$ as a \emph{pullback functor} even when $\D$ is not this specific prederivator.

\item Let $\cA$ be a Grothendieck abelian category, \eg the category of $R$-modules for any commutative ring $R$. We can then form the (unbounded) derived category $D(\cA)$ without any set-theoretic issues. For any small category $K$, the category $\Fun(K,\cA)$ is still a Grothendieck abelian category, so we may take its derived category as well. The assignment $\D_{\cA}(K)=D(\Fun(K,\cA))$ defines a prederivator $\D_\cA$ with~$u^\ast$ induced by precomposition.

\item Let $\cM$ be a combinatorial model category, \eg the category of simplicial sets with either the standard or Joyal model structure. The first model structure makes \textbf{sSet} a model for topological spaces and the second makes it a model for $(\infty,1)$-categories. In any case, for any small category $K$ the category $\Fun(K,\cM)$ is still a combinatorial model category (using either the injective or projective model structure) where the weak equivalences are defined pointwise. Then define the prederivator $\D_\cM$ by $\D_\cM(K)=\Ho(\Fun(K,\cM))$ with~$u^\ast$ again induced by precomposition.

\item\label{ex:shiftedprederivator} Let $\D$ be any prederivator. Then for any $I\in\Cat$ we may obtain another prederivator $\D^I$ defined by $\D^I(K)=\D(I\times K)$ and $\D^I(u)=\D(\id_I\times u)$. We usually call $\D^I$ a \emph{shifted prederivator}. This is a useful way of constructing new prederivators from old ones and is a key technique in the theory of derivators.
\end{enumerate}
\end{ex}

Recall we are working under the slogan `system of diagram categories', so we would like to define some axioms to ensure that $\D(K)$ looks like $K$-shaped diagrams in some category.

We can immediately identify what category that should be. Let $\D$ be a prederivator, $K$ a small category, and $k\in K$ be any object. Recall that we have a functor that classifies the object $k$ which we denote $k\colon e\to K$. Then for any $X\in \D(K)$, we have an object $k^\ast X\in \D(e)$. Suppose that $f\colon k_1\to k_2$ is a map in $K$. Then we have a corresponding natural transformation $f^\ast\colon k_1^\ast \Rightarrow k_2^\ast$ and thus a map $f^\ast X\colon k_1^\ast X\to k_2^\ast X$ in $\D(e)$. Repeating this process for all objects and maps in $K$, we obtain a functor
\begin{equation*}
\dia_K\colon \D(K)\to \Fun(K,\D(e))
\end{equation*}
which sends $X\in\D(K)$ to the functor which assembles all the above data. We call this an \emph{underlying diagram functor}, and its existence implies that the prederivator $\D$ should be modeling $K$-shaped diagrams in $\D(e)$, which we call the \emph{underlying category} or the \emph{base} of the prederivator. We will refer to the categories $\D(K)$ as \emph{coherent} diagrams, as opposed to the \emph{incoherent} diagrams $\Fun(K,\D(e))$. In analogy with `base', sometimes we will call $\D(K)$ the \emph{levels} of the derivator. 

This motivates the following definition:
\begin{defn}
A \emph{semiderivator} is a prederivator $\D$ satisfying the following two axioms:
\begin{enumerate}
\item[(Der1)] Coproducts are sent to products. Explicitly, consider any set $\{K_a\}_{a\in A}$ of small categories, and let $i_b\colon K_b\to \displaystyle\coprod_{a\in A} K_a$ be the inclusion for any $b\in A$. Pulling back along this inclusion gives a functor
\begin{equation*}
i_b^\ast\colon\D\left(\coprod_{a\in A}K_a\right)\to\D(K_b)
\end{equation*}
which induces a map to the product
\begin{equation*}
(i_b^\ast)_{b\in A}\colon\D\left(\coprod_{a\in A}K_a\right)\to \prod_{b\in A}\D(K_b)
\end{equation*}
We require this map to be an equivalence of categories for any\linebreak collection $\{K_a\}_{a\in A}$.

\item[(Der2)] Isomorphisms are detected pointwise. That is, for any $K\in \Cat$, the underlying diagram functor $\dia_K$ is conservative. More specifically, a map $f\colon X\to Y$ is an isomorphism in $\D(K)$ if and only if the map $k^\ast f\colon k^\ast X\to k^\ast Y$ is an isomorphism for all $k\in K$.
\end{enumerate}
\end{defn}

This is the bare minimum such that $\D$ acts like a system of diagram categories. Der1 means that choosing two disconnected diagrams is the same (up to equivalence) as choosing a diagram on each component, and Der2 says that $\D(e)$ has control over isomorphisms of diagrams. Each of the prederivators in Example~\ref{ex:prederivators} is a semiderivator, as they are precisely constructed to be systems of diagram categories.

Note that both Der1 and Der2 are \emph{properties} of a prederivator $\D$. The functors in these axioms always exist for any prederivator; we are investigating whether they are equivalences or conservative (respectively). Thus we are not placing additional structures on a prederivator to obtain a semiderivator, but examining how nice the 2-functor $\D$ happens to be.

A derivator is a semiderivator that admits homotopy limits and colimits, as well as more general homotopy Kan extensions that may be computed pointwise as homotopy limits and colimits. We will again motivate these as best as possible. Recall that in ordinary category theory, limits and colimits can be thought of as adjoint functors. Specifically, if we let $K$ be a diagram shape and $\cC$ a category, then we can consider the functor $\Delta\colon\cC\to\Fun(K,\cC)$ which sends $c\in \cC$ to the constant diagram of shape~$K$. Assuming that $\cC$ admits limits and colimits of shape $K$, we have the following adjunction:
\begin{equation*}
\vcenter{\xymatrix{
\cC\ar[d]_-\Delta\\
\Fun(K,\cC)\ar@(l,l)[u]^{\colim_K}\ar@(r,r)[u]_{\lim_K}
}}
\end{equation*}
The colimit functor is the left adjoint of $\Delta$, and the limit functor is the right adjoint. We will not recall the construction of left and right Kan extensions in ordinary category theory, but they may be computed pointwise as colimits and limits respectively, see \eg \cite[X.3,~Theorem~1]{Mac98}.

In derivators, we will replace $\cC$ by $\D(e)$ and $\Fun(K,\cC)$ by $\D(K)$. The analogue of~$\Delta$ in this case is the following functor: consider the projection $\pi_K\colon K\to e$ of $K$ to the final category. Then for any $x\in\D(e)$, we have a coherent diagram $\pi_K^\ast x\in \D(K)$. To get a handle on this object, we can restrict to $k\in K$. We then notice that
\begin{equation*}
k^\ast \pi_K^\ast x=(\pi_K k)^\ast x
\end{equation*}
by strict 2-functoriality. The map $\pi_K k\colon e\to e$ must be the identity functor, and thus $(\pi_K k)^\ast=\id_{\D(e)}^\ast$ so $k^\ast\pi_K^\ast x=x$ for any $x\in\D(e)$. Using the same sort of reasoning we can see that for any $k\to k'$ in $K$, the map $k^\ast x\to k'^\ast x$ is the identity on $x$. Therefore~$\pi_K^\ast$ is a coherent constant diagram functor. Having established this, we can now give the axioms of a derivator. Because it will be necessary in the long run, we will give this definition in halves. 

\begin{defn}\label{defn:leftderivator}
A semiderivator $\D$ is a \emph{left derivator} if it satisfies the following two axioms:
\begin{enumerate}
\item[(Der3L)] The semiderivator $\D$ is (homotopically) cocomplete. Specifically, for every functor $u\colon J\to K$, the pullback $u^\ast$ admits a left adjoint, which we denote $u_!\colon\D(J)\to\D(K)$ and call the \emph{(homotopy) left Kan extension along $u$}. As a special case, this includes $\pi_K\colon K\to e$ and thus $\D(e)$ admits all (coherent) colimits.

\item[(Der4L)] Left Kan extensions can be computed pointwise. Let $u\colon J\to K$ and $k\in~K$. Then recall that we have the following oriented pullback square in $\Cat$ from Diagram~\ref{dia:commasquare}
\begin{equation*}
\vcenter{\xymatrix{
(u/k)\ar[r]^-{\pr}\ar[d]_-{\pi}&J\ar[d]^-u\ar@{}[dl]|\swtrans\ar@{}[dl]<-1.25ex>|\alpha\\
e\ar[r]_k&K
}}
\end{equation*}
where we let $\pi=\pi_{(u/k)}$ for brevity. Applying the semiderivator $\D$ to this square, we obtain the following square in $\CAT$, remembering that functors are reversed and natural transformations are not:
\begin{equation*}
\xymatrix{
\D((u/k))&\D(J)\ar@{}[dl]|{\swtrans}\ar@{}[dl]<-1.25ex>|{\alpha^\ast}\ar[l]_-{\pr^\ast}\\
\D(e)\ar[u]^-{\pi^\ast}&\D(K)\ar[l]^-{k^\ast}\ar[u]_-{u^\ast}
}
\end{equation*}
We would like to use techniques in the calculus of mates from Section~\ref{sec:calculusofmates}, but our square does not have the correct orientation. If we flip it around so that the functors point to the bottom-right, we obtain
\begin{equation*}
\vcenter{\xymatrix{
\D(K)\ar[r]^-{k^\ast}\ar[d]_-{u^\ast}&\D(e)\ar[d]^-{\pi^\ast}\ar@{}[dl]|\netrans\ar@{}[dl]<-1.25ex>|{\alpha^\ast}\\
\D(J)\ar[r]_-{\pr^\ast}&\D((u/k))
}}
\end{equation*}
By Der3L, both vertical functors admit left adjoints, so we may construct the left mate of $\alpha^\ast$, which we denote by $\alpha_!$ rather than $(\alpha^\ast)_!$:
\begin{equation*}
\vcenter{\xymatrix{
\D(K)\ar[r]^-{u_!}\ar@(d,l)[dr]_-=&\ar@{}[dl]|\netrans\D(K)\ar[r]^-{k^\ast}\ar[d]_-{u^\ast}&\D(e)\ar[d]^-{\pi^\ast}\ar@{}[dl]|\netrans\ar@{}[dl]<-1.25ex>|{\alpha^\ast}\ar@(r,u)[dr]^-=&{}\ar@{}[dl]|\netrans\\
{}&\D(J)\ar[r]_-{\pr^\ast}&\D((u/k))\ar[r]_-{\pi_!}&\D(e)
}}
\end{equation*}
In total we have the natural transformation $\alpha_!\colon \pi_!\pr^\ast\Rightarrow k^\ast u_!$. We require this map to be a natural isomorphism for all $u\colon J\to K$ and $k\in K$. That this transformation is an isomorphism is called the \emph{Beck-Chevalley condition} in similar contexts.
\end{enumerate}
\end{defn}
Once again, note that these are properties of a semiderivator $\D$, not additional structures.

To analyze Der4L a bit more, for $X\in\D(J)$, $k^\ast u_!X$ is the value of the coherent diagram $u_!X$ at the point $k\in K$. The lefthand side of $\alpha_!$ is the colimit of shape $(u/k)$ of the diagram obtained by pulling back $X$ to the comma category. This is particularly useful in situations where we have better knowledge of the comma category and the projection functor than the functor $u\colon J\to~K$.

For the sake of completeness, we give the dual definition explicitly.
\begin{defn}
A semiderivator $\D$ is a \emph{right derivator} if it satisfies the following two axioms:
\begin{enumerate}
\item[(Der3R)] The semiderivator $\D$ is (homotopically) complete. Specifically, for every functor $u\colon J\to K$, the pullback $u^\ast$ admits a right adjoint, which we denote $u_\ast\colon\D(J)\to\D(K)$ and call the \emph{(homotopy) right Kan extension along $u$}. As a special case, this includes $\pi_K\colon K\to e$ and thus $\D(e)$ admits all (coherent) limits.

\item[(Der4R)] Right Kan extensions can be computed pointwise. Let $u\colon J\to K$ and $k\in K$. Then recall that we have the following oriented pullback square in $\Cat$
\begin{equation*}
\vcenter{\xymatrix{
(k/u)\ar[r]^-{\pr}\ar[d]_-{\pi}&J\ar[d]^-u\ar@{}[dl]|\netrans\ar@{}[dl]<-1.25ex>|\beta\\
e\ar[r]_k&K
}}
\end{equation*}
where we let $\pi=\pi_{(k/u)}$ for brevity. Applying the semiderivator $\D$ to this square and taking the right mate $\beta_\ast=(\beta^\ast)_\ast$ we obtain a natural transformation $\beta_\ast\colon k^\ast u_\ast\Rightarrow \pi_\ast\pr^\ast$. We require this map to be a natural isomorphism for all $u\colon J\to K$ and $k\in K$.
\end{enumerate}
\end{defn}

\begin{remark}\label{rk:halfderivator}
Historically in derivator literature, a derivator which admits all colimits and in which left Kan extensions may be computed pointwise is called a right derivator. This is due to an analogy with right exact functors that we feel does not justify the confusing terminology. We exercise the right to rechristen these objects.
\end{remark}

\begin{defn}
A \emph{derivator} is a semiderivator that is both a left and a right derivator. That is, it is a prederivator satisfying Der1, Der2, Der3L, Der3R, Der4L, and Der4R.
\end{defn}

\begin{ex}\label{ex:derivators}\ \\\vspace{-1em}
\begin{enumerate}
\item For a category $\cC$, the prederivator $\D_\cC$ is a left (resp. right) derivator if and only if $\cC$ is cocomplete (resp. complete).
\item For a Grothendieck abelian category $\cA$ (which is in particular bicomplete), the prederivator $\D_\cA$ is a derivator.
\item For a combinatorial model category $\cM$, working under the assumption that model categories are bicomplete, the prederivator $\D_\cM$ is a derivator. 
\end{enumerate}
\end{ex}

We can also generate new derivators from old:
\begin{prop}[Theorem~1.25,~\cite{Gro13}]\label{prop:shiftedderivatorgood}
Let $\D$ be a left (resp. right) derivator and $I$ a small category. Then the shifted prederivator $\D^I$ of Example~\ref{ex:prederivators}\ref{ex:shiftedprederivator} is also a left (resp. right) derivator. 
\end{prop}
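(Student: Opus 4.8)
The plan is to verify Der1, Der2, Der3L and Der4L for $\D^I$, the right derivator case being dual (e.g.\ by passing to $\D\op$ and $I\op$). Three of the axioms are formal. Since products distribute over disjoint unions, $I\times\coprod_a K_a\cong\coprod_a(I\times K_a)$ compatibly with the inclusions, so Der1 for $\D^I$ follows from Der1 for $\D$. For Der2, note $\D^I(e)=\D(I)$ and that for $k\in K$ the evaluation $\D^I(K)\to\D^I(e)$ is $(\id_I\times k)^\ast\colon\D(I\times K)\to\D(I)$; for $f$ in $\D^I(K)=\D(I\times K)$ one has $(i,k)^\ast f=i^\ast(\id_I\times k)^\ast f$ for all $i\in I$, $k\in K$, so by Der2 for $\D$ (applied to $I$) the family $\{(\id_I\times k)^\ast f\}_k$ detects invertibility of $f$ precisely when the family $\{(i,k)^\ast f\}_{(i,k)}$ does, and the latter does by Der2 for $\D$ (applied to $I\times K$). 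Finally, $\id_I\times u\colon I\times J\to I\times K$ is a functor in $\Cat$, so Der3L for $\D$ furnishes a left adjoint to $(\id_I\times u)^\ast=\D^I(u)$, and we set $u_!^{\D^I}:=(\id_I\times u)_!$.

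The content is in Der4L. Unwinding definitions, the mate $\alpha_!^{\D^I}\colon\pi_!^{\D^I}\pr^\ast\Rightarrow k^\ast u_!^{\D^I}$ attached in $\D^I$ to the comma square of $u$ at $k$ is exactly the mate, formed in $\D$, of the square
\begin{equation*}
\vcenter{\xymatrix{
I\times(u/k)\ar[r]^-{\id_I\times\pr}\ar[d]_-{\id_I\times\pi}&I\times J\ar[d]^-{\id_I\times u}\ar@{}[dl]|\swtrans\ar@{}[dl]<-1.25ex>|{\id_I\times\alpha}\\
I\times e\ar[r]_-{\id_I\times k}&I\times K
}}
\end{equation*}
obtained by applying $I\times(-)$ to that comma square. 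This square is not itself a comma square --- its lower-left corner is $I$, not $e$ --- so Der4L for $\D$ does not apply directly. Since $\alpha_!^{\D^I}$ is a $2$-morphism between functors with target $\D^I(e)=\D(I)$, by Der2 for $\D$ it suffices to show $i^\ast\alpha_!^{\D^I}$ is invertible for each $i\in I$. Fix $i$. On the target, $i^\ast k^\ast u_!^{\D^I}=(i,k)^\ast(\id_I\times u)_!$, which Der4L for $\D$ (applied to $\id_I\times u$ at the object $(i,k)\in I\times K$) rewrites via the comma category $\bigl((\id_I\times u)/(i,k)\bigr)$; inspection identifies the latter with $(\id_I/i)\times(u/k)$ --- an object being a pair of maps $i'\to i$ and $u(j)\to k$ --- compatibly with its projection to $I\times J$, which becomes the product $\pr\times\pr$ of the two comma-category projections. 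On the source, Der4L for $\D$ (applied to $\id_I\times\pi\colon I\times(u/k)\to I$ at $i$) rewrites $i^\ast\pi_!^{\D^I}=i^\ast(\id_I\times\pi)_!$ via $\bigl((\id_I\times\pi)/i\bigr)$, again identified with $(\id_I/i)\times(u/k)$; precomposing with $\pr^\ast$ yields literally the same functor $(\id_I/i)\times(u/k)\to I\times J$. So both $i^\ast$(source) and $i^\ast$(target) of $\alpha_!^{\D^I}$ are canonically of the form $\sigma_!\tau^\ast$, with $\sigma$ the terminal functor on $(\id_I/i)\times(u/k)$ and $\tau=\pr\times\pr$.

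What remains --- and this is the only nontrivial step --- is to verify that $i^\ast\alpha_!^{\D^I}$ \emph{equals} the composite of these two structural isomorphisms, not merely that it is some isomorphism between the same objects. This is a mate diagram chase: assemble the pasted $2$-diagram built from the crossed square above together with the comma square of $\id_I\times u$ at $(i,k)$ and the comma square of $\id_I\times\pi$ at $i$ (whose mates have just been identified), and apply the compatibility of the calculus of mates with pasting (Proposition~\ref{prop:calculusofmates}(1)) and the naturality of mate formation to recognize $i^\ast\alpha_!^{\D^I}$ as that composite. Granting this, every constituent is an isomorphism by Der4L for $\D$, hence $i^\ast\alpha_!^{\D^I}$ is an isomorphism for all $i$, and $\alpha_!^{\D^I}$ is an isomorphism by Der2 for $\D$. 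I expect this last bookkeeping --- carrying the comma-category identifications through the pasting of mates --- to be the main obstacle; everything else is formal.
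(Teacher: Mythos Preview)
The paper does not supply its own proof of this proposition; it simply cites Groth. Your argument is correct and is essentially the standard one. The verification of Der1, Der2, Der3L is routine as you say, and your analysis of Der4L is right: the mate $\alpha_!^{\D^I}$ is exactly the left mate in $\D$ of the square obtained by crossing the comma square with $I$, and your identification of both relevant comma categories with $(\id_I/i)\times(u/k)$ is correct.

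The bookkeeping step you flag as the obstacle is not really an obstacle once you recognize it as an instance of the pasting machinery the paper develops in Section~\ref{sec:homotopyexactsquares}. Concretely, paste onto the left of your crossed square the Der4L comma square for $\id_I\times\pi$ at $i\in I$. The total pasting has the form
\begin{equation*}
\vcenter{\xymatrix{
((\id_I\times\pi)/i)\ar[r]\ar[d]&I\times(u/k)\ar[r]^-{\id_I\times\pr}\ar[d]&I\times J\ar[d]^-{\id_I\times u}\\
e\ar[r]_-i&I\ar[r]_-{\id_I\times k}&I\times K
}}
\end{equation*}
and, under the identification $((\id_I\times\pi)/i)\cong(\id_I/i)\times(u/k)\cong((\id_I\times u)/(i,k))$ you already made, the outer rectangle \emph{is} the Der4L comma square for $\id_I\times u$ at $(i,k)$, hence $\D$-exact. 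Lemma~\ref{lemma:horzhomotopyexactcancel} then says the crossed square is $\D$-exact (for every $i$, hence outright). The proof of that lemma is precisely the Der2-then-compare-mates argument you are sketching by hand, so invoking it absorbs your ``main obstacle'' into a single citation. Nothing in your write-up is wrong; it is just the unpackaged form of this.
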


\begin{remark}
There is a `fifth axiom' for derivators which is not necessary in all contexts, ours included. We mention it for the sake of completeness.

Consider the ordinal $[1]$ as a category, that is, the category with two objects $0,1$ and one non-identity map $0\to 1$. For a (pre)derivator $\D$, the category $\D([1])$ is `coherent arrows' in $\D(e)$. Not only can we look at $\dia_{[1]}\colon \D([1])\to\Fun([1],\D(e))$, but also we could do the same for the (pre)derivator $\D^K$ for any small category $K$. This functor is defined to be
\begin{equation*}
\dia_{[1],K}\colon \D^K([1])=\D(K\times[1])\to \Fun([1],\D(K))=\Fun([1],\D^K(e))
\end{equation*}
We call this a \emph{partial underlying diagram functor}; we forget the $[1]$-dimension of coherence but leave the $K$-dimension of the diagram coherent.
\begin{defn}\label{defn:strong}
A prederivator $\D$ is \emph{strong} if it satisfies the following axiom:
\begin{enumerate}
\item[(Der5)] For any category $K\in \Cat$, the functor $\dia_{[1],K}$ is full and essentially surjective.
\end{enumerate}
\end{defn}

Essential surjectivity means that whenever we have a map in $\D(K)$, we can lift it to a coherent arrow between $K$-shaped diagrams, \ie an element of $\D(K\times[1])$. Fullness means that whenever we have a commutative square in $\D(K)$, we can lift it to a map in $\D(K\times[1])$.

This is a condition which usually holds when a prederivator comes from some sort of explicit model; all the derivators of Example~\ref{ex:derivators} are strong. Constructions made inside the theory of derivators will usually preserve the property of being a left/right derivator, but not necessarily of being strong. Lagkas in~\cite{Lag17} gives a heuristic for constructing non-strong derivators arising in the context of monads over a triangulated category translated into derivator theory.

As a final remark, Cisinski in \cite{Cis10} proves that derivators which arise from `cat\'egories d\'erivables' (which include model categories and categories of fibrant objects)  satisfy an even stronger axiom. For notation, a \emph{finite free category} is a finite category with no endomorphisms or relations. For instance, any ordinal category $[n]$ (similar to $[1]$ above) is finite free. Any category whose underlying diagram has a commutative square, however, is not free. All finite free categories have underlying diagrams that look like trees.
\begin{defn}\label{defn:strongstrongness}
Let $I$ be a finite free category. A prederivator is \emph{strong} in the sense of Cisinski if it satisfies the following axiom:
\begin{itemize}
\item[(Der5$'$)] For any category $K\in\Cat$, the functor $\dia_{I,K}\colon \D(K\times I)\to \Fun(I,\D(K))$ is full and essentially surjective.
\end{itemize}
\end{defn}
But as we said above, we are not concerned with strongness for the theory of half derivators, so will not need to distinguish between these two notions.
\end{remark}

Looking at the bases of the derivators in Example~\ref{ex:derivators}, we find a limitation: what if we have some category that does not admit all limits and colimits, but still admits some? What we wanted to produce a derivator modelling diagrams in the bounded derived category $D^b(\cA)$ of an abelian category or the model category of finite CW-complexes? We could easily produce a semiderivator for these cases, but the homotopical bicompleteness on all of $\Cat$ throws us off.

The solution is to restrict our attention at times to a full sub-2-category $\Dia\subset\Cat$ that contains only those diagrams over which we may take a limit or colimit, and replace $\Cat$ by $\Dia$ in all the axioms above. However, not every sub-2-category is appropriate; for example, given any functor $u\colon J\to K$ in $\Dia$ and any $k\in K$, we should also have $(u/k)\in\Dia$ in order to make sense of Der3L. The following axioms were first defined in~\cite[p.3]{Mal07} and refined in~\cite[Definition~1.12]{Gro13}
\begin{defn}\label{defn:diagramcategory}
A full sub-2-category $\Dia\subset\Cat$ is a \emph{diagram 2-category} if it satisfies the following axioms:
\begin{enumerate}
\item[(Dia1)] $\Dia$ contains all finite posets.
\item[(Dia2)] $\Dia$ is closed under finite coproducts and (1-categorical) pullbacks.
\item[(Dia3)] For every $u\colon J\to K$ in $\Dia$ and every $k\in K$, $(u/k),(k/u)\in\Dia$.
\item[(Dia4)] If $K\in\Dia$, then $K\op\in\Dia$.
\item[(Dia5)] For every Grothendieck fibration $u\colon J\to K$ in $\Cat$, if for all $k\in K$ the (strict) fibre~$u\inv(k)$ is in $\Dia$ and $K\in\Dia$, then $J\in\Dia$ as well.
\end{enumerate}
\end{defn}

To motivate the above axioms: Dia1 makes sure that we have something in $\Dia$ to work with; Dia2 makes sure we can check Der1; Dia3 makes sure we can check Der3L and Der3R; Dia4 is to preserve some semblance of duality; and Dia5 is related to the idea that, in this situation, we should be able to `build' $J$ out of the fibres $J_k$ and the base $K$, so it should be a valid diagram shape as well. The smallest choice of diagram 2-category is $\mathbf{Pos_f}$ the 2-category of all finite posets. The largest choice is, of course, $\Cat$ itself.

\begin{remark}\label{rk:dirf}
A common choice for $\Dia$ is $\mathbf{Dir_f}$, the 2-category of finite direct categories, \ie categories whose nerve has finitely many nondegenerate simplices. Keller in~\cite{Kel07} gives a construction of a derivator $\D_\cE$ with domain $\mathbf{Dir_f}$ for any exact category $\cE$ with $\D_\cE(e)=D^b(\cE)$. Cisinski proves that a Waldhausen category $\cW$ whose weak equivalences satisfy some mild properties gives rise to a left derivator $\D_\cW$ with domain $\mathbf{Dir_f}$ such that $\D_\cW(e)=\Ho\cW$ (see Lemma~\ref{lemma:waldhausenderivator}).
 \end{remark}

The main results of the application to K-theory in \cite{Col20a} apply to derivators with domain $\Dirf$. However, to establish the theory of half derivators in abstract, we will allow $\Dia$ arbitrary.

\section{The 2-category of (pre)derivators}\label{sec:2catofpreders}

Having set up the objects of our study, we can now describe the morphisms between them and  the 2-morphisms between those.
\begin{defn}
Let $\D,\E\colon\Dia\op\to\CAT$ be prederivators. A \emph{morphism of prederivators} $\Phi\colon\D\to\E$ is a pseudonatural transformation of the associated 2-functors. This consists of the following data: for each $K\in\Dia$ we have a\linebreak functor $\Phi_K\colon\D(K)\to~\E(K)$ and for every $u\colon J\to K$ we have a natural isomorphism $\gamma_u^\Phi\colon u^\ast \Phi_K\Rightarrow \Phi_J u^\ast$
\begin{equation*}
\xymatrix{
\D(K)\ar[r]^{\Phi_K}\ar[d]_-{u^\ast}&\E(K)\ar[d]^-{u^\ast}\ar@{}[dl]|{\swtrans}\ar@{}@<-1.75ex>[dl]|{\gamma_u^\Phi}\\
\D(J)\ar[r]_{\Phi_J}&\E(J)
}
\end{equation*}
where we have slightly abused notation by writing $u^\ast$ for both $\D(u)$ and $\E(u)$.

The family $\{\gamma_u^\Phi\}$ is subject to coherence conditions. Foremost, for two composable functors $u\colon J\to K$ and $v\colon I\to J$ we require that the pasting on the left be equal to the square on the right:
\begin{equation*}
\vcenter{\xymatrix{
\D(K)\ar[r]^{\Phi_K}\ar[d]_-{u^\ast}&\E(K)\ar[d]^-{u^\ast}\ar@{}[dl]|{\swtrans}\ar@{}@<-1.75ex>[dl]|{\gamma_u^\Phi}\\
\D(J)\ar[d]_-{v^\ast}\ar[r]^{\Phi_J}&\E(J)\ar[d]^-{v^\ast}\ar@{}[dl]|{\swtrans}\ar@{}@<-1.75ex>[dl]|{\gamma_v^\Phi}\\
\D(I)\ar[r]_{\Phi_I}&\E(I)
}}
\quad=\quad
\vcenter{\xymatrix{
\D(K)\ar[r]^{\Phi_K}\ar[d]_-{(uv)^\ast}&\E(K)\ar[d]^-{(uv)^\ast}\ar@{}[dl]|{\swtrans}\ar@{}@<-1.75ex>[dl]|{\gamma_{uv}^\Phi}\\
\D(I)\ar[r]_{\Phi_I}&\E(I)
}}
\end{equation*}
In addition, we require $\gamma_{\id_J}^\Phi=\id_{\D(J)}$. There is also required compatibility  with natural transformations in $\Dia$. For two functors $u,v\colon J\to K$ and a natural transformation $\alpha\colon u\Rightarrow~v$, we require the below pastings to be equal:
\begin{equation*}
\vcenter{\xymatrix{
\D(K)\ar[r]^{\Phi_K}\ar[d]^-{u^\ast}="S0"\ar@(l,l)[d]_-{v^\ast}="T0"&\E(K)\ar[d]^-{u^\ast}\ar@{}[dl]|{\swtrans}\ar@{}@<-1.75ex>[dl]|{\gamma_u^\Phi}\\
\D(J)\ar[r]_{\Phi_J}&\E(J)
\ar@{}"S0";"T0" |{\mathbin{\rotatebox[origin=c]{180}{$\Rightarrow$}}}_-{\alpha^\ast}
}}
\quad=\quad
\vcenter{\xymatrix{
\D(K)\ar[r]^{\Phi_K}\ar[d]_-{v^\ast}&\E(K)\ar[d]_-{v^\ast}="T0"\ar@(r,r)[d]^-{u^\ast}="S0"\ar@{}[dl]|{\swtrans}\ar@{}@<-1.75ex>[dl]|{\gamma_v^\Phi}\\
\D(J)\ar[r]_{\Phi_J}&\E(J)
\ar@{}"S0";"T0" |{\mathbin{\rotatebox[origin=c]{180}{$\Rightarrow$}}}_-{\alpha^\ast}
}}
\end{equation*}
The general definition of a pseudonatural transformation can be found at~\cite[Definition~7.5.2]{Bor94a}.

A \emph{morphism of (left/right) derivators} is a just a morphism of prederivators; there is no additional condition. A morphism $\Phi\colon\D\to\E$ is an \emph{equivalence of (pre)derivators} if $\Phi_K$ is an equivalence of categories for every $K\in\Dia$.
\end{defn}

Having claimed that we are assembling a 2-category, we now define the 2-morphisms.

\begin{defn}
If $\Phi,\Psi\colon\D\to \E$ are two morphisms of (pre)derivators, a 2-morphism \linebreak$\mu\colon\Phi\to\Psi$ is given by a \emph{modification} of pseudonatural transformations of 2-functors. This is a natural transformation $\mu_K\colon\Phi_K\Rightarrow\Psi_K$ for every $K\in\Dia$ satisfying the following coherence condition: if $u,v\colon J\to K$ are two functors and $\alpha\colon u\Rightarrow v$ is a natural transformation, then we have an equality of pastings
\begin{equation*}
\vcenter{\xymatrix@C=3em{
\D(K)\ar@/^1pc/[r]^-{u^\ast}="S1"\ar@/_1pc/[r]_-{v^\ast}="T1"&\D(J)\ar@/^1pc/[r]^-{\Phi_J}="S0"\ar@/_1pc/[r]_-{\Psi_J}="T0"&\E(J)
\ar@{} "S0";"T0" |{\mathbin{\rotatebox[origin=c]{270}{$\Rightarrow$}}} \ar@{}@<-1.4ex>"S0";"T0"|{\mu_J}
\ar@{} "S1";"T1" |{\mathbin{\rotatebox[origin=c]{270}{$\Rightarrow$}}} \ar@{}@<-1.4ex>"S1";"T1"|{\alpha^\ast}
}}
\quad=\quad
\vcenter{\xymatrix@C=3em{
\D(K)\ar@/^1pc/[r]^-{\Phi_K}="S0"\ar@/_1pc/[r]_-{\Psi_K}="T0"&\E(K)\ar@/^1pc/[r]^-{u^\ast}="S1" \ar@/_1pc/[r]_-{v^\ast}="T1"&\E(J)
\ar@{} "S0";"T0" |{\mathbin{\rotatebox[origin=c]{270}{$\Rightarrow$}}} \ar@{}@<-1.4ex>"S0";"T0"|{\mu_K}
\ar@{} "S1";"T1" |{\mathbin{\rotatebox[origin=c]{270}{$\Rightarrow$}}} \ar@{}@<-1.4ex>"S1";"T1"|{\alpha^\ast}
}}
\end{equation*}
See also~\cite[Definition~7.5.3]{Bor94a}. A modification $\mu$ is called an \emph{isomodification} if $\mu_K$ is a natural isomorphism for every $K\in\Dia$.
\end{defn}

This gives us a 2-category $\PDer$ of prederivators and full sub-2-categories left derivators, right derivators, and derivators. We name this last one $\Der$. By way of notation, we will reserve uppercase Greek letters $\Phi,\Psi$ for morphisms of derivators, lowercase Greek letters $\mu,\nu$ for modifications.

On the surface, these definitions require a ton of compatible information, and it seems unlikely that we would ever be able to construct morphisms of derivators. There is a ready source of morphisms, however, which arise from functors in $\Dia$. Consider a functor $u\colon J\to K$ and the associated pullback $u^\ast\colon\D(K)\to\D(J)$. Then we can consider $u^\ast\colon\D^K\to\D^J$ as a morphism between the associated shifted derivators. Let us be explicit about where the structure isomorphisms $\gamma^{u^\ast}$ come from in this case. Let $v\colon A\to B$ be a functor in $\Dia$. Then we need to populate the below square with a natural isomorphism:
\begin{equation*}
\vcenter{\xymatrix{
\D^K(B)\ar[r]^-{u^\ast_B}\ar[d]_-{v^\ast}&\D^J(B)\ar[d]^-{v^\ast}\ar@{}[dl]|{\swtrans}\ar@{}@<-1.75ex>[dl]|{\gamma_v^{u^\ast}}\\
\D^K(A)\ar[r]_-{u^\ast_A}&\D^J(A)
}}
\end{equation*}
If we make more explicit what all these maps are in terms of the derivator $\D$, the above is equal to 
\begin{equation}\label{eq:pullbackasmorphism}
\vcenter{\xymatrix@C=3em{
\D(K\times B)\ar[r]^-{(u\times\id_B)^\ast}\ar[d]_-{(\id_K\times v)^\ast}&\D(J\times B)\ar[d]^-{(\id_J\times v)^\ast}\ar@{}[dl]|{\swtrans}\ar@{}@<-1.75ex>[dl]|{\gamma_v^{u^\ast}}\\
\D(K\times A)\ar[r]_-{(u\times \id_A)^\ast}&\D(J\times A)
}}
\end{equation}
So we are looking for a transformation $(\id_J\times v)^\ast(u\times\id_B)^\ast\Rightarrow(u\times\id_A)^\ast(\id_K\times v)^\ast$. But by strict 2-functoriality of $\D$, these are the same functor $(u\times v)^\ast$, so the\linebreak choice $\gamma_v^{u^\ast}=\id_{(u\times v)^\ast}$ fits the bill. This choice happily satisfies all compatibility conditions. In fact, we have terminology for just this situation.

\begin{defn}
A morphism of derivators $\Phi\colon\D\to\E$ is called \emph{strict} if for\linebreak any $u\colon J\to~K$ in $\Dia$, the corresponding structure natural isomorphism $\gamma^{\Phi}_u$ is the identity.
\end{defn}

Another class of morphisms of derivators arise from left and right Kan extensions, but these are not strict. For any $u\colon J\to K$ in $\Dia$, if $\D$ is a left derivator then we have a morphism $u_!\colon\D^J\to\D^K$. For $v\colon A\to B$, the structure isomorphisms in this case need to populate the square
\begin{equation*}
\vcenter{\xymatrix@C=3em{
\D(J\times B)\ar[r]^-{(u\times\id_B)_!}\ar[d]_-{(\id_J\times v)^\ast}&\D(K\times B)\ar[d]^-{(\id_K\times v)^\ast}\ar@{}[dl]|{\swtrans}\ar@{}@<-1.75ex>[dl]|{\gamma_v^{u_!}}\\
\D(J\times A)\ar[r]_-{(u\times \id_A)_!}&\D(K\times A)
}}
\end{equation*}
The natural transformation above is the left mate of Diagram~\ref{eq:pullbackasmorphism}, and it is not hard to show that this mate is a natural isomorphism. Instead of showing it directly, we will show shortly how to make this conclusion. Unfortunately, there is no reason for this mate to be the identity, which corresponds to the fact that colimits are unique up to unique isomorphism, but not strictly unique.

\begin{defn}
Let $\D,\E$ be left derivators and $u\colon J\to K$ in $\Dia$. We say that a morphism $\Phi\colon\D\to\E$ \emph{preserves left Kan extensions along $u$} if the left mate of $(\gamma^\Phi_u)\inv$ is a natural isomorphism. Specifically, we have the pasting
\begin{equation*}
\xymatrix{
\D(J)\ar@(d,l)[dr]_-=\ar[r]^-{u_!}&\D(K)\ar@{}[dl]|{\netrans}\ar[r]^{\Phi_K}\ar[d]_-{u^\ast}&\E(K)\ar[d]^-{u^\ast}\ar@{}[dl]|{\netrans}\ar@{}@<-1.75ex>[dl]|{(\gamma_u^\Phi)\inv}\ar@(r,u)[dr]^-=&\ar@{}[dl]|{\netrans}\\
{}&\D(J)\ar[r]_{\Phi_J}&\E(J)\ar[r]_{u_!}&\E(K)
}
\end{equation*}
giving us a natural transformation $(\gamma_u^\Phi)\inv_!\colon u_!\Phi_J\Rightarrow\Phi_K u_!$ which we demand is an isomorphism, where again we slightly abuse notation by writing $u_!$ for the left adjoint to both $\D(u)$ and $\E(u)$. If the morphism $\Phi$ preserves left Kan extensions along\linebreak all $u\colon J\to K$ in $\Dia$, we say that $\Phi$ is \emph{cocontinuous}.
\end{defn}

Cocontinuity means that we can compute the left Kan extension along $u$ in $\D$ then apply $\Phi$, or apply $\Phi$ and compute the left Kan extension along $u$ in $\E$ and we obtain isomorphic objects. There is an analogous definition of a \emph{continuous} morphism that we will spell out explicitly for reference.
\begin{defn}
Let $\D,\E$ be right derivators and $u\colon J\to K$ in $\Dia$. We say that a morphism $\Phi\colon\D\to\E$ \emph{preserves right Kan extensions along $u$} if the right mate of $\gamma^\Phi_u$ is a natural isomorphism. Specifically, we have the pasting
\begin{equation*}
\xymatrix{
\D(J)\ar@(d,l)[dr]_-=\ar[r]^-{u_\ast}&\D(K)\ar@{}[dl]|{\swtrans}\ar[r]^{\Phi_K}\ar[d]_-{u^\ast}&\E(K)\ar[d]^-{u^\ast}\ar@{}[dl]|{\swtrans}\ar@{}@<-1.75ex>[dl]|{\gamma_u^\Phi}\ar@(r,u)[dr]^-=&\ar@{}[dl]|{\swtrans}\\
{}&\D(J)\ar[r]_{\Phi_J}&\E(J)\ar[r]_{u_\ast}&\E(K)
}
\end{equation*}
giving us a natural transformation $(\gamma_u^\Phi)_\ast\colon \Phi_K u_\ast\Rightarrow u_\ast\Phi_J$. If the morphism $\Phi$ preserves right Kan extensions along all $u\colon J\to K$ in $\Dia$, we say that $\Phi$ is \emph{continuous}.
\end{defn}

\begin{remark}
It might seem like Proposition~\ref{prop:calculusofmates}(3) says that a morphism of derivators~$\Phi$ is cocontinuous if and only if it is continuous. However, cocontinuity uses $(\gamma^u_\Phi)\inv$ while continuity uses $\gamma^u_\Phi$, and the calculus of mates says nothing about the relationship between the mates of $\alpha$ and $\alpha\inv$.
\end{remark}

We will now focus on cocontinuity, leaving the dual formulations to the reader. Let $\Phi\colon\D\to\E$ be a morphism between left derivators. We will be able to determine if $\Phi$ is cocontinuous in two key ways.

\begin{prop}[Proposition~2.3, \cite{Gro13}]\label{prop:colimitpreserving}
A morphism $\Phi\colon\D\to\E$ between left derivators is cocontinuous if and only if $\Phi$ preserves left Kan extensions along all\linebreak maps $\pi\colon K\to e$ for $K\in\Dia$, \ie $\Phi$ preserves (homotopy) colimits.
\end{prop}
\begin{proof}
We begin with some observations. Der2 tells us that isomorphisms may be checked pointwise, so that for a particular $X\in\D(J)$, we have that
\begin{equation*}
(\gamma_u^\Phi)\inv_{!,X}\colon u_!\Phi_JX\to\Phi_K u_! X
\end{equation*}
if an isomorphism if and only if
\begin{equation*}
k^\ast(\gamma_u^\Phi)\inv_{!,X}\colon k^\ast u_!\Phi_JX\to k^\ast\Phi_K u_! X
\end{equation*}
is an isomorphism for all $k\in K$. This tactic will be common, so we give it a name: precomposing or postcomposing a natural transformation by a functor is generally called \emph{whiskering}. We will now modify the domain and codomain of this map (up to isomorphism) to make it easier to study.

We may postcompose with the structure isomorphism $\gamma_k^\Phi\colon k^\ast\Phi_K\Rightarrow \Phi_e k^\ast$ to obtain
\begin{equation*}
\vcenter{\xymatrix@C=4em{
k^\ast u_!\Phi_JX\ar[r]^-{k^\ast(\gamma_u^\Phi)\inv_{!,X}}& k^\ast\Phi_K u_! X\ar[r]_-\cong^-{(\gamma_k^\Phi)_{u_! X}}\ar[r]&\Phi_e k^\ast u_! X
}}
\end{equation*}
Now using Der4L, we can precompose both the domain and codomain by the natural isomorphism $\pi_!\pr^\ast\Rightarrow k^\ast u_!$, recalling the notation from Definition~\ref{defn:leftderivator}. This gives us
\begin{equation*}
\vcenter{\xymatrix@C=4em{
k^\ast u_!\Phi_JX\ar[r]^-{k^\ast(\gamma_u^\Phi)\inv_{!,X}}& k^\ast\Phi_K u_! X\ar[r]_-\cong^-{(\gamma_k^\Phi)_{u_! X}}\ar[r]&\Phi_e k^\ast u_! X\\
\pi_!\pr^\ast\Phi_J X\ar[u]^-\cong&&\Phi_e\pi_!\pr^\ast X\ar[u]_-\cong
}}
\end{equation*}
As one final step, we can commute $\pr^\ast$ and $\Phi$ using $\gamma_{\pr}^\Phi$ and complete the above to a commutative square
\begin{equation*}
\vcenter{\xymatrix@C=4em{
k^\ast u_!\Phi_JX\ar[r]^-{k^\ast(\gamma_u^\Phi)\inv_{!,X}}& k^\ast\Phi_K u_! X\ar[r]_-\cong^-{(\gamma_k^\Phi)_{u_! X}}\ar[r]&\Phi_e k^\ast u_! X\\
\pi_!\pr^\ast\Phi_J X\ar[u]^-\cong\ar[r]_-{\pi_!(\gamma_{\pr}^\Phi)_X}^-\cong&\pi_!\Phi_{(u/k)}\pr^\ast X\ar[r]_-{(\gamma_\pi^\Phi)\inv_{!,\pr^\ast X}}&\Phi_e\pi_!\pr^\ast X\ar[u]_-\cong
}}
\end{equation*}
We are able to fill in the bottom-right map because of the functoriality of the calculus of mates and the coherence conditions imposed on the structure isomorphisms $\{\gamma^\Phi\}$. We have thus reduced the question of all $(\gamma_u^\Phi)\inv_!$ being natural isomorphisms to the specific case of $(\gamma_\pi^\Phi)\inv_!$ being a natural isomorphism for all maps $\pi\colon (u/k)\to e$, at least on objects of the form $\pr^\ast X$. This is a key technique in derivator proofs: we have reduced a general problem to more specific one in the base of the derivator. This reduction allows us to conclude the lemma.
\end{proof}
\begin{remark}
Rahn's proof of the preceding lemma is identical to ours, but he uses purely the calculus of mates and pasting. We spell out the technique of whiskering and the use of Der2 explicitly in anticipation of Lemma~\ref{lemma:horzhomotopyexactcancel}.
\end{remark}

For the second criterion for cocontinuous morphisms, we recall the following from 1-category theory: left adjoint functors preserve colimits and right adjoint functors preserve limits. Since preserving colimits is enough to preserve all left Kan extensions, we need to figure out what an adjunction of morphisms of derivators should be. There is a general definition in any 2-category that applies here:
\begin{defn}
Let $\Phi\colon\D\to\E$ and $\Psi\colon \E\to\D$ be two morphisms of (pre)derivators. We say that \emph{$\Phi$ is left adjoint to $\Psi$} (equivalently, \emph{$\Psi$ is right adjoint to $\Phi$}) if there exist two modifications $\eta\colon \id_\D\Rightarrow\Psi\Phi$ and $\varepsilon\colon \Phi\Psi\Rightarrow\id_\E$ satisfying the usual triangle identities.
\end{defn}

In particular, an adjunction $(\Phi,\Psi)$ gives rise to an adjunction of functors $(\Phi_K,\Psi_K)$ for each $K\in\Dia$. However, this condition is not sufficient. A morphism of\linebreak derivators $\Phi\colon\D\to\E$ may admit a right adjoint to $\Phi_K\colon\D(K)\to\E(K)$ for\linebreak all $K\in\Dia$, but part of the data of a right adjoint morphism of derivators is the structure isomorphisms, which we have no way of recovering in this general situation.

\begin{prop}[Proposition~2.9, \cite{Gro13}]
Let $\Phi\colon\D\to\E$ be a morphism of left derivators such that each $\Phi_K$ admits a right adjoint $\Psi_K$. Then the collection $\{\Psi_K\}$ assemble to a morphism of derivators $\Psi\colon\E\to\D$ which is right adjoint to $\Phi$ if and only if $\Phi$ is cocontinuous.
\end{prop}
\begin{proof}
Let us begin with the backwards direction. To obtain a morphism of derivators $\Psi\colon\E\to\D$, for all $u\colon J\to K$, we need to come up with a structure isomorphism in the following square:
\begin{equation*}
\xymatrix{
\E(K)\ar[r]^-{\Psi_K}\ar[d]_-{u^\ast}&\D(K)\ar[d]^-{u^\ast}\ar@{}[dl]|{\swtrans}\ar@{}@<-1.75ex>[dl]|{\gamma_u^\Psi}\\
\E(J)\ar[r]_-{\Psi_J}&\D(J)
}
\end{equation*}
Using the fact that $(\Phi_K,\Psi_K)$ and $(\Phi_J,\Psi_J)$ are adjunctions, we may take the left mate of this square (after flipping it for convenience):
\begin{equation*}
\xymatrix{
\D(K)\ar[r]^-{\Phi_K}\ar@(d,l)[dr]_-=&\E(K)\ar[d]_-{\Psi_K}\ar@{}[dl]|{\netrans}\ar[r]^-{u^\ast}&\E(J)\ar[d]^-{\Psi_J}\ar@{}[dl]|{\netrans}\ar@{}@<-1.75ex>[dl]|{\gamma_u^\Psi}\ar@(r,u)[dr]^-=&{}\ar@{}[dl]|{\netrans}\\
{}&\D(K)\ar[r]_-{u^\ast}&\D(J)\ar[r]_-{\Phi_J}&\E(J)
}
\end{equation*}
This gives us a transformation $\Phi_J u^\ast\Rightarrow u^\ast\Phi_K$. We are already equipped with a candidate transformation here, namely $(\gamma_u^\Phi)\inv$. Thus we have the notion\linebreak that $(\gamma_u^\Psi)_! = (\gamma_u^\Phi)\inv$. Using Proposition~\ref{prop:calculusofmates}(2), we may take the right mate of both these transformations and conclude that the natural choice for structure isomorphisms is $\gamma_u^\Psi=((\gamma_u^\Psi)_!)_\ast=(\gamma_u^\Phi)\inv_\ast$.

Unfortunately, we do not know that $\gamma_u^\Psi$ defined this way is an isomorphism. The left and right mates of $\gamma_u^\Phi$ have no particular properties for an arbitrary morphism~$\Phi$. However, if $\Phi$ is cocontinuous, then $(\gamma_u^\Phi)\inv_!$ are natural isomorphisms for all $u\colon J\to K$ in $\Dia$. By Proposition~\ref{prop:calculusofmates}(3), this implies that $(\gamma_u^\Phi)\inv_\ast$ are natural isomorphisms as well, which allows us to furnish the collection $\{\Psi_K\}$ with structure isomorphisms $\{\gamma_u^\Psi\}$.

On the other hand, if $\Psi$ is a right adjoint morphism of derivators, then we have an equality $(\gamma_u^\Psi)\inv=(\gamma_u^\Phi)_\ast$ for any $u\colon J\to K$ using the same calculus of mates as above. Thus $(\gamma_u^\Phi)_\ast$ and $(\gamma_u^\Phi)_!$ are isomorphisms, proving that $\Phi$ is cocontinuous.
\end{proof}

Using these two propositions, we can give a number of cocontinuous morphisms of derivators.

\begin{ex}
Let $u\colon J\to K$ be a functor in $\Dia$.
\begin{enumerate}
\item If $u$ admits a categorical right adjoint $v\colon K\to J$, then $u^\ast\colon\D(K)\to\D(J)$ is right adjoint to $v^\ast\colon\D(J)\to\D(K)$ because (strict) 2-functors send adjunctions to adjunctions, though in our case which is left and which is right swaps. We can upgrade this to, for any prederivator $\D$, a left adjoint morphism $v^\ast\colon \D^K\to\D^J$ which preserves any left Kan extensions that $\D^K$ happens to have.

\item If $\D$ is a left derivator, then the left adjoint functor $u_!\colon \D(J)\to\D(K)$ lifts to a left adjoint morphism of derivators $u_!\colon \D^J\to\D^K$ with right adjoint $u^\ast$. Similarly, if $\D$ is a right derivator, $u^\ast\colon \D^K\to\D^J$ is a left adjoint morphism of derivators.

\end{enumerate}
\end{ex}

\section{Exact squares}\label{sec:homotopyexactsquares}  

At this point, we will begin to use more seriously the calculus of mates to prove a few statements that apply broadly to any derivator $\D$ on any diagram 2-category $\Dia$. The idea is the following: the axiom Der4 has given us a class of squares in $\Dia$ whose left or right mates will always be natural isomorphisms. On the other hand, Proposition~\ref{prop:calculusofmates} gives us a way to compare the mates of pastings with pastings of mates. Therefore we might be able to conclude that some other squares automatically have this same property.

We remark at this point that Maltsiniotis in \cite{Mal12} proved versions of some results below, although the author was not previously aware of his work. Just as our interpreation of Rahn's results, however, we are able to improve some of Maltsiniotis' results and clear up differences in notation. As above, we will cite the relevant results as they arise.

Let us give the first definition.
\begin{defn}\label{defn:homotopyexactsquare}
Let $\D$ be a derivator. Consider the following square in $\Dia$:
\begin{equation}\label{dia:homotopyexactsquare}
\vcenter{\xymatrix{
A\ar[r]^-{v}\ar[d]_-{p}&B\ar[d]^-{q}\ar@{}[dl]|\swtrans\ar@{}[dl]<-1.25ex>|\alpha\\
C\ar[r]_-{w}&D
}}
\end{equation}
We call such a square \emph{$\D$-exact} if the right mate of $\D(\alpha)=\alpha^\ast$,\linebreak namely $\alpha_\ast\colon q^\ast w_\ast\Rightarrow v_\ast p^\ast$, is a natural isomorphism. Equivalently, by Proposition~\ref{prop:calculusofmates}(3), we could ask that the left mate $\alpha_!\colon p_! v^\ast\Rightarrow w^\ast q_!$ be a natural isomorphism. 
\end{defn}

\begin{remark}
Technically our notation for the left and right mates of $\D(\alpha)=\alpha^\ast$ should be $(\alpha^\ast)_!$ and $(\alpha^\ast)_\ast$, but since we will never be taking the mates of the transformation $\alpha\colon qv\Rightarrow wp$ in $\Dia$ we remove the ${}^\ast$ from the notation.
\end{remark}

\begin{remark}\label{rk:homotopyexact}
The usual terminology for a square as in Diagram~\ref{dia:homotopyexactsquare} which is $\D$-exact for every derivator $\D$ is \emph{homotopy exact}. However, we shy away from using this terminology for a few reasons.

It still makes sense to ask if such a square is $\D$-exact for a \emph{prederivator} $\D$ in the case that $\D$ admits the appropriate left or right Kan extensions. In particular, any right derivator admits the right mate $\alpha_\ast$ and any left derivator admits the left mate~$\alpha_!$. Thus we might say Diagram~\ref{dia:homotopyexactsquare} is homotopy exact if it is $\D$-exact for every \emph{half} derivator $\D$.

Work by Cisinski in \cite{Cis06} and \cite{Cis08} provides the strongest connection between actual homotopy theory, \ie the homotopy theory of spaces, and derivators. This is one of the sources of the family of terms `homotopy X' meaning `$\D$-X for any derivator $\D$'. But Cisinki's results depend very strongly on the choice of $\Dia=\Cat$ and his earlier work on basic localisers in $\Cat$.

Maltsiniotis' approach to exact squares has much in common with Cisinski's methods. Many of Maltsiniotis' proofs rely on the fact that, if $\D$ is a derivator on $\Cat$, then we can obtain a basic localiser in $\Cat$ of $\D$-equivalences, see \cite[\S4.5]{Mal12}. General results on basic localisers, \eg \cite[Lemme~3.12]{Mal12}, then apply automatically to the theory of derivators. Our proofs will work more with derivator technology per se.

Unfortunately, there does not seem to be any guarantee that analogous results on basic localisers hold for the case of derivators on $\Dia\subsetneq\Cat$ or half derivators defined on $\Cat$. We hope to address these two cases in future work.
\end{remark}

By the axioms we have put on derivators, we already know that, for any (left and right) derivator~$\D$, the following squares are $\D$-exact for all $u\colon J\to K$ and all $k\in K$:
\begin{equation*}
\vcenter{\xymatrix{
(u/k)\ar[r]^-{\pr}\ar[d]_-{\pi}&J\ar[d]^-u\ar@{}[dl]|\swtrans\ar@{}[dl]<-1.25ex>|\alpha\\
e\ar[r]_k&K
}}
\qquad
\vcenter{\xymatrix{
(k/u)\ar[r]^-{\pr}\ar[d]_-{\pi}&J\ar[d]^-u\ar@{}[dl]|\netrans\ar@{}[dl]<-1.25ex>|\beta\\
e\ar[r]_k&K
}}
\end{equation*}

We have a second class of examples coming from adjunctions in $\Dia$.

\begin{prop}[Proposition~1.18,~\cite{Gro13}]\label{prop:adjointhomotopyexact}
Let $\ell\colon A\to B$ be a left adjoint functor and let $\D$ be any \emph{prederivator}. Then the following commutative square is $\D$-exact:
\begin{equation*}
\vcenter{\xymatrix{
A\ar[r]^-\ell\ar[d]_-{\pi_A}&B\ar[d]^-{\pi_B}\ar@{}[dl]|\netrans\ar@{}[dl]<-1.75ex>|\id\\
e\ar[r]_-{\id_e}&e
}}
\end{equation*}

Dually, for any right adjoint functor $r\colon A\to B$, the following square is $\D$-exact:
\begin{equation*}
\vcenter{\xymatrix{
A\ar[r]^-r\ar[d]_-{\pi_A}&B\ar[d]^-{\pi_B}\ar@{}[dl]|\swtrans\ar@{}[dl]<-1.25ex>|\id\\
e\ar[r]_-{\id_e}&e
}}
\end{equation*}
\end{prop}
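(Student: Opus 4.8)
The plan is to reduce the statement to a classical fact about adjunctions and then feed it through the calculus of mates. We are looking at the square with corners $A, B, e, e$, top map $\ell$, bottom map $\id_e$, verticals $\pi_A$ and $\pi_B$, and the natural transformation being the identity $\pi_B\ell = \pi_A$ (which holds strictly since there is only one functor to $e$). Applying a prederivator $\D$ and flipping the square so the functors point down-right, we get the square with corners $\D(e), \D(e), \D(A), \D(B)$: the top map is $\id$ on $\D(e)$, the left map is $\pi_A^\ast$, the bottom map is $\ell^\ast$, and the right map is $\pi_B^\ast$. Its structure $2$-cell is the identity $\ell^\ast\pi_B^\ast = \pi_A^\ast$. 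The claim is that the right mate of this identity $2$-cell, namely $(\id_e)^\ast(\pi_B)_\ast \Rightarrow (\pi_A)_\ast\ell^\ast$, i.e.\ $(\pi_B)_\ast \Rightarrow (\pi_A)_\ast\ell^\ast$, is a natural isomorphism. (For this to make sense we only need $\D$ to admit the right Kan extensions $(\pi_A)_\ast$ and $(\pi_B)_\ast$, i.e.\ to be a right derivator, or a prederivator that happens to have these; by Proposition~\ref{prop:calculusofmates}(3) the left-mate version is then automatic when both exist.)

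First I would observe that since $\ell\colon A\to B$ has a right adjoint $r\colon B\to A$, the induced functor $\ell^\ast\colon\D(B)\to\D(A)$ has a left adjoint $r^\ast$ — strict $2$-functors send adjunctions to adjunctions, with the variance flipped, as noted in the excerpt. Thus $\ell^\ast$ is simultaneously a right-adjoint functor (with left adjoint $r^\ast$) and, tautologically, precomposition along a functor. The key point is: $r^\ast$ is also (up to the canonical coherence isomorphisms) the comparison in the other direction. Actually the cleanest route: the square in question is the pasting of two squares, using the adjunction counit/unit of $(\ell \dashv r)$ in $\Dia$. Specifically, the identity square $\pi_B\ell = \pi_A$ can be analyzed by noting $\pi_A = \pi_B\ell$ and $\pi_B = \pi_A r$ up to the unit $\eta\colon\id_A\Rightarrow r\ell$ and counit $\varepsilon\colon\ell r\Rightarrow\id_B$.

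The most efficient argument, and the one I would actually write, is to identify the right mate explicitly rather than invoke any pasting gymnastics. The right mate of the identity $\ell^\ast\pi_B^\ast = \pi_A^\ast$ is, by the definition of right mate from Section~\ref{sec:calculusofmates}, the composite $(\pi_B)_\ast \xrightarrow{\eta} (\pi_A)_\ast\pi_A^\ast(\pi_B)_\ast = (\pi_A)_\ast\ell^\ast\pi_B^\ast(\pi_B)_\ast \xrightarrow{\varepsilon} (\pi_A)_\ast\ell^\ast$, where $\eta$ is the unit of $(\pi_A^\ast \dashv (\pi_A)_\ast)$ and $\varepsilon$ the counit of $(\pi_B^\ast \dashv (\pi_B)_\ast)$. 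I would then show this is an isomorphism by exhibiting $r^\ast$ as mediating an inverse: because $\ell \dashv r$ in $\Dia$, applying $\D$ gives $r^\ast \dashv \ell^\ast$ as functors, with unit and counit $r$-cells obtained by applying $\D$ to $\eta,\varepsilon$; and $\pi_B = \pi_A r$ strictly, so $\pi_B^\ast = r^\ast\pi_A^\ast$. Feeding this through Proposition~\ref{prop:calculusofmates}(1) (compatibility of mates with pasting) applied to the decomposition of the identity square into the adjunction-unit square for $(\ell,r)$ pasted with the trivially-commuting square $\pi_B = \pi_A r$, the right mate factors as a composite of right mates, one of which is the mate of an adjunction-unit square (always invertible, essentially by the triangle identities) and the other of which is the mate of a genuinely commuting square with one side an equivalence-inducing equality, hence invertible.

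The main obstacle I anticipate is purely bookkeeping: keeping straight which unit/counit belongs to which adjunction (the $(\ell \dashv r)$ adjunction in $\Dia$ versus the $(\pi_A^\ast \dashv (\pi_A)_\ast)$ and $(\pi_B^\ast \dashv (\pi_B)_\ast)$ adjunctions in $\CAT$), and verifying that the two triangle identities collapse the composite to an isomorphism. No deep input is needed beyond Proposition~\ref{prop:calculusofmates}; the content is that an adjunction $\ell\dashv r$ makes the "collapse to a point" square homotopy exact because $r$ is, up to homotopy, a section-like inverse that $\D$ can see. The dual statement for a right adjoint $r\colon A\to B$ follows by the same argument with the roles of $!$ and $\ast$ (and unit/counit) interchanged, or formally by passing to opposite categories via Dia4 — though since the statement is about an arbitrary prederivator and Dia4 is a property of $\Dia$, I would just rerun the mate computation in the dual rather than cite a duality principle.
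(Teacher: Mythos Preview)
Your approach is workable but takes a genuinely different route from the paper, and the paper's is considerably more direct. Both hinge on the same observation---that $\ell\dashv r$ in $\Dia$ yields $r^\ast\dashv\ell^\ast$ after applying $\D$---but you and the paper take mates with respect to \emph{different} pairs of parallel edges.

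You compute the mate appearing in the definition of $\D$-exactness, namely $(\pi_B)_\ast\Rightarrow(\pi_A)_\ast\ell^\ast$, which uses the right adjoints of the \emph{vertical} maps $\pi_A^\ast,\pi_B^\ast$. This forces you to assume those Kan extensions exist before you can even begin, and then you propose a pasting decomposition that is not made precise: what you call ``the adjunction-unit square for $(\ell,r)$ pasted with the trivially-commuting square $\pi_B=\pi_A r$'' does not obviously paste back to the original square. The clean version of your idea is simply that $\pi_B^\ast=r^\ast\pi_A^\ast$ (since $\pi_A r=\pi_B$), so the composite right adjoint $\pi_{A,\ast}\ell^\ast$ is right adjoint to $\pi_B^\ast$ and hence canonically isomorphic to $(\pi_B)_\ast$; one then checks the mate realizes this isomorphism.

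The paper instead takes the mate with respect to the \emph{horizontal} edges: since $r^\ast$ is a left adjoint (right adjoint $\ell^\ast$) and $\id_e^\ast$ is trivially self-adjoint, it forms the mate $\pi_B^\ast\Rightarrow\ell^\ast\pi_A^\ast$. Because $\pi_A\ell=\pi_B$ strictly, both sides equal $\pi_B^\ast$, and the mate is literally the identity---ultimately because any $2$-cell landing in $e$ is an identity. This computation needs no Kan extensions at all, which is exactly why the proposition can be stated for an arbitrary prederivator; whenever the $\D$-exactness mates do exist, the calculus of mates transfers the isomorphism over. So the paper's route gives a one-line computation and the full generality of the statement, while yours is more literally faithful to Definition~\ref{defn:homotopyexactsquare} but costs you both a hypothesis and some bookkeeping you left unfinished.
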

\begin{proof}
We will prove the second case, with the modifications for the first obvious. We make use of the following fact: any strict 2-functor sends adjunctions to adjunctions. The data of an adjunction is a pair of functors with a unit and counit satisfying the triangle identities. Specifically for our case: $r\colon A\to B$, $\ell\colon B\to A$, and $\eta\colon \id_A\Rightarrow r\ell$, $\varepsilon\colon \ell r\Rightarrow \id_B$ such that $\varepsilon_\ell\ell\eta=\id_\ell$ and $r\varepsilon\eta_r=\id_r$. Applying any prederivator $\D$ gives us $r^\ast\colon\D(B)\to\D(A)$, $\ell^\ast\colon\D(A)\to\D(B)$, and
\begin{equation*}
\eta^\ast\colon\id_{\D(A)}=(\id_A)^\ast\Rightarrow (r\ell)^\ast=\ell^\ast r^\ast,\quad \varepsilon^\ast\colon r^\ast \ell^\ast=(\ell r)^\ast\Rightarrow (\id_B)^\ast=\id_{\D(B)}
\end{equation*}
The equalities above are a consequence of the strict 2-functoriality\linebreak of $\D\colon\Dia\op\to\CAT$, and that the triangle equalities still hold is another consequence.  However, the composition of the left and right adjoint now seem to be backwards, but this is because our domain was $\Dia\op$ to begin with; the order of the adjunction is reversed. Thus the adjunction $(\ell,r)$ in $\Dia$ yields an adjunction~$(r^\ast,\ell^\ast)$ in $\CAT$ for any prederivator $\D$.

Now, consider the image of our square after applying a derivator $\D$:
\begin{equation*}
\vcenter{\xymatrix{
\D(A)&\D(B)\ar[l]_-{r^\ast}\ar@{}[dl]|\swtrans\ar@{}[dl]<-1.25ex>|{\id^\ast}\\
\D(e)\ar[u]^-{\pi_A^\ast}&\D(e)\ar[l]^-{\id_e^\ast}\ar[u]_-{\pi_B^\ast}
}}
\end{equation*}
Because $r^\ast$ is now a left adjoint, we can take the right mate of this square to obtain~$\id_\ast\colon \id_{e,\ast}\pi_B^\ast\Rightarrow \ell^\ast\pi_A^\ast$, where we write $\id_{e,\ast}$ for illustration but note that it is just the identity on $\D(e)$ because $\id_e^\ast$ is. But now, $\ell^\ast \pi_A^\ast=(\pi_A \ell)^\ast$ by strict 2-functoriality, and $\pi_A \ell\colon B\to e$ must be equal to $\pi_B\colon B\to e$ as $e$ is the final category. The properties of the calculus of mates expressed in Proposition~\ref{prop:calculusofmates} implies that $\id_\ast$ expresses this equality, and thus we conclude that the square is $\D$-exact. 
\end{proof}

Under Maltsiniotis' conditions, this is the claim that axioms CI 1$'$d/g of \cite[\S3.3]{Mal12} are satisfied for any prederivator giving rise to a basic localiser (which requires additional hypotheses).

\begin{remark}\label{rk:adjointnotation}
If $\D$ is a prederivator and $u\colon J\to K$ a functor such that $u^\ast$ admits a left (resp.\,right) adjoint satisfying Der4L (resp.\,Der4R), we will still write $u_!$ (resp.\,$u_\ast$) for that adjoint. In particular, in the proposition above, we would write $\ell_!=r^\ast$ and~$r_\ast=\ell^\ast$. Left or right adjoints of $u^\ast$ arising from categorical adjoints to $u$ will always satisfy the appropriate version of Der4.
\end{remark}

\begin{cor}[Lemma~1.19(1),~\cite{Gro13}]\label{cor:initiallimit}
Suppose that $B$ is a category with a final object $b_1\in B$. Then for any $X\in\D(B)$, there is a natural isomorphism $b_1^\ast X\overset{\cong}\rightarrow\pi_{B,!}X$. Similarly, if $B$ is a category with an initial object $b_0\in B$, then there is a natural isomorphism $\pi_{B,\ast}X\overset{\cong}\rightarrow b_0^\ast X$.
\end{cor}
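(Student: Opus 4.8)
The plan is to deduce Corollary~\ref{cor:initiallimit} from Proposition~\ref{prop:adjointhomotopyexact} by observing that the unique functor $b_1\colon e\to B$ classifying a \emph{final} object $b_1\in B$ is a right adjoint to $\pi_B\colon B\to e$. Indeed, the counit/unit data is forced: the unit $\id_e\Rightarrow \pi_B b_1 = \id_e$ is the identity, and the counit $b_1\pi_B \Rightarrow \id_B$ is the natural transformation whose component at $b\in B$ is the unique map $b\to b_1$ (which exists and is unique precisely because $b_1$ is final). The triangle identities hold trivially since $e$ is terminal. Dually, the functor $b_0\colon e\to B$ classifying an \emph{initial} object is a left adjoint to $\pi_B$.

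Granting this, I would apply the second half of Proposition~\ref{prop:adjointhomotopyexact} to the right adjoint $r = b_1\colon e\to B$. That proposition tells us the square
\begin{equation*}
\vcenter{\xymatrix{
e\ar[r]^-{b_1}\ar[d]_-{\id_e}&B\ar[d]^-{\pi_B}\ar@{}[dl]|\swtrans\ar@{}[dl]<-1.25ex>|\id\\
e\ar[r]_-{\id_e}&e
}}
\end{equation*}
is $\D$-exact, meaning the left mate $\id_!\colon \pi_{B,!}\,b_1^{\ \ast}\Rightarrow \id_e^{\ \ast}\,\id_{e,!}$ is a natural isomorphism. Wait --- I must be careful about which corner sits where: to get $b_1^\ast X \overset\sim\to \pi_{B,!}X$ I want the square with $A=e$, $v=b_1$, $B$ in the top-right, $p=\pi_A=\id_e$, $q=\pi_B$, so that the left mate reads $\alpha_!\colon w_!q^\ast \Rightarrow p^\ast v_!$, i.e.\ $(\id_e)_!\,\pi_B^\ast \Rightarrow (\id_e)^\ast (b_1)_!$. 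That is the wrong direction. The cleaner route is to use the \emph{first} half of Proposition~\ref{prop:adjointhomotopyexact} with the left adjoint $\ell = b_1$ --- no: $b_1$ is the right adjoint. Let me instead simply invoke $\D$-exactness and then evaluate. For the square with $A = e$, $p = \id_e$, $v = b_1\colon e\to B$, $q = \pi_B\colon B\to e$, $w = \id_e$, and $\alpha = \id$, the right mate $\alpha_\ast\colon w^\ast q_\ast \Rightarrow p^\ast v_\ast$ is $\pi_{B,\ast}\Rightarrow b_1^\ast$; this is not what I want either. The point is that I should pick the \emph{orientation} of the square so that the relevant mate is the map $b_1^\ast \Rightarrow \pi_{B,!}$, which means taking $A = B$, $v = \id_B$ won't help. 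Concretely: use the left-adjoint half of the Proposition applied to $\ell = b_1$ is wrong since $b_1$ is a right adjoint; use the right-adjoint half applied to $r = b_0$ (initial) gives $\pi_{B,\ast}\Rightarrow b_0^\ast$ after taking the right mate, and the left-adjoint half applied to $\ell = b_0$ gives the colimit statement. So: \textbf{for the final-object statement}, $b_1\colon e\to B$ is a right adjoint to $\pi_B$, hence by the second square of Proposition~\ref{prop:adjointhomotopyexact} (with the roles $A \leftrightarrow e$, $B\leftrightarrow B$, reading the square as $e\xrightarrow{b_1}B$, $e\to e$, $\pi_B$, $\id_e$) we get $\D$-exactness, and the left mate of the identity $2$-cell is the desired isomorphism $b_1^\ast X \to \pi_{B,!}X$; \textbf{for the initial-object statement}, $b_0\colon e\to B$ is a left adjoint to $\pi_B$, and the first square of the Proposition gives $\D$-exactness, whose right mate is the desired isomorphism $\pi_{B,\ast}X\to b_0^\ast X$.

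The only genuine content to check, and the step I expect to take the most care, is matching the abstract mate coming out of Proposition~\ref{prop:adjointhomotopyexact} with the explicitly named map in the Corollary: I need to verify that after identifying $\id_e^\ast$ with the identity functor on $\D(e)$ and using strict $2$-functoriality to rewrite $b_1^\ast \pi_{?}^\ast$, the mate $2$-cell becomes precisely a map $b_1^\ast X \to \pi_{B,!}X$ (resp.\ $\pi_{B,\ast}X\to b_0^\ast X$) natural in $X$. This is exactly the bookkeeping done in the proof of Proposition~\ref{prop:adjointhomotopyexact} --- the mate ``expresses the equality'' $\pi_A\ell = \pi_B$ --- so I would quote that and observe that here the relevant equality is $\pi_B b_1 = \id_e$ (resp.\ $\pi_B b_0 = \id_e$), which forces the mate to land where claimed. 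Naturality in $X$ is automatic since mates of $2$-cells are, by construction, natural transformations of functors. I would present this as: establish the two adjunctions ($b_1\dashv$ nothing, rather $\pi_B\dashv b_1$ and $b_0\dashv \pi_B$) in one sentence each, invoke Proposition~\ref{prop:adjointhomotopyexact}, and unwind the mate.
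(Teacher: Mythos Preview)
Your approach is correct and matches the paper's one-line proof exactly: the functor $b_1\colon e\to B$ is a right adjoint and $b_0\colon e\to B$ is a left adjoint, so Proposition~\ref{prop:adjointhomotopyexact} applies directly. Your presentation is unnecessarily hesitant about orientations and which mate to take---the paper simply asserts the two adjunctions and invokes the proposition without any unwinding---but the underlying mathematics is identical.
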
 
The one-line proof is that the functor $b_1\colon e\to B$ is a right adjoint and $b_0\colon e\to B$ is a left adjoint. Applying Proposition~\ref{prop:adjointhomotopyexact} gives the required natural isomorphisms.

There is a particular derivator technique in the calculus of mates that we have not used yet, but will need to use throughout this section. We state it below as a general lemma.
\begin{lemma}\label{lemma:horzhomotopyexactcancel}
Let $\D$ be a \emph{left} derivator and consider the following square in $\Dia$:
\begin{equation*}
\vcenter{\xymatrix{
A\ar[r]^-{v}\ar[d]_-{p}&B\ar[d]^-{q}\ar@{}[dl]|\swtrans\ar@{}[dl]<-1.25ex>|\alpha\\
C\ar[r]_-{w}&D
}}
\end{equation*}
We may paste onto this square the comma category associated to any $c\in C$:
\begin{equation}\label{dia:horzpastingcancel}
\vcenter{\xymatrix{
(p/c)\ar[r]^-\pr\ar[d]_-{\pi_{(p/c)}}&\ar@{}[dl]|\swtrans\ar@{}[dl]<-1.25ex>|\gamma A\ar[r]^-{v}\ar[d]_-{p}&B\ar[d]^-{q}\ar@{}[dl]|\swtrans\ar@{}[dl]<-1.25ex>|\alpha\\
e\ar[r]_-c&C\ar[r]_-{w}&D
}}
\end{equation}
Then our original square is $\D$-exact if and only if the pasting
\begin{equation*}
\vcenter{\xymatrix{
(p/c)\ar[r]^-{v\text{\,pr}}\ar[d]_-{\pi_{(p/c)}}&B\ar[d]^-{q}\ar@{}[dl]|\swtrans\\
e\ar[r]_-{w(c)}&D
}}
\end{equation*}
(with natural transformation $\alpha\odot\gamma$) is $\D$-exact for every $c\in C$.
\end{lemma}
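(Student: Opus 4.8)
The plan is to use the compatibility of the calculus of mates with pasting (Proposition~\ref{prop:calculusofmates}(1)) together with the fact that the comma square attached to $p$ at $c$ is already $\D$-exact by Der4L/Der4R. Concretely, I would work with left mates throughout (the right-mate version being dual via Proposition~\ref{prop:calculusofmates}(3)). Applying $\D$ to Diagram~\ref{dia:horzpastingcancel} and taking left mates, Proposition~\ref{prop:calculusofmates}(1) gives
\begin{equation*}
(\alpha\odot\gamma)_! = \alpha_!\odot\gamma_!,
\end{equation*}
a pasting of two natural transformations. Here $\gamma_!$ is the left mate of the comma square for $p$ and $c$, which is a natural isomorphism because that square is $\D$-exact (it is exactly the square controlled by Der4L). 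So $(\alpha\odot\gamma)_!$ is a pasting of an isomorphism with $\alpha_!$, whiskered by the relevant functors.

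The key step is then to observe that whiskering by a fixed functor and pasting with a natural isomorphism both preserve and reflect the property of being a natural isomorphism — so $(\alpha\odot\gamma)_!$ is a natural isomorphism if and only if the whiskered copy of $\alpha_!$ appearing in the pasting is one. If our original square is $\D$-exact, then $\alpha_!$ is a natural isomorphism, hence so is every whiskering of it, hence so is $(\alpha\odot\gamma)_!$ for every $c$; this gives the forward direction directly. For the converse, suppose $(\alpha\odot\gamma)_!$ is a natural isomorphism for every $c\in C$. Unwinding the pasting, this says that the composite $\alpha_! \circ (\text{whiskering by } \pr)$, pre- and post-composed with the isomorphism $\gamma_!$, is invertible, so $\alpha_!$ whiskered by $\pr\colon (p/c)\to A$ is a natural isomorphism for every $c$. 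The point is now that the functors $\{\pr_c\colon (p/c)\to A\}_{c\in C}$ are jointly "enough" to detect isomorphy of a natural transformation out of/into functors on $\D(A)$: by Der2 it suffices to check $\alpha_!$ after applying $a^\ast$ for every $a\in A$, and for a given $a$ one picks $c = p(a)$ and the object $(a, \id_{p(a)}) \in (p/c)$, whose image under $\pr$ is $a$; whiskering by $\pr_c$ and then evaluating at $(a,\id)$ recovers the evaluation of $\alpha_!$ at $a$. Hence $\alpha_!$ is a natural isomorphism, i.e.\ the original square is $\D$-exact.

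I expect the main obstacle to be purely bookkeeping: making the identification $(\alpha\odot\gamma)_! = \alpha_!\odot\gamma_!$ precise requires applying $\D$ to a \emph{pasting} of squares in $\Dia$ (so that the images of $\pi_{(p/c)}\colon (p/c)\to e$ and of $\pr$ etc.\ all have the adjoints demanded by Der3L), and then invoking Proposition~\ref{prop:calculusofmates}(1) for the composite; one must track which functors do the whiskering so that the final evaluation-at-$(a,\id_{p(a)})$ argument is unambiguous. There is also a small point to check, that the natural transformation attached to the outer pasted rectangle really is $\alpha\odot\gamma$ with the stated boundary $v\,\pr$ on top and $w(c)$ on the bottom — this is immediate from how pasting of 2-cells in $\CAT$ works and from $w\circ c = w(c)$, $q\circ v\circ\pr = q\circ(v\,\pr)$. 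Once these identifications are in place, the argument is a two-line application of the mates calculus plus Der2 and Der4L.
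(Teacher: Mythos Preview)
Your overall strategy is right---use the compatibility of mates with pasting, the fact that the comma square is $\D$-exact by Der4L, and then Der2---and your forward direction is fine. But the converse contains a genuine bookkeeping error that makes the argument as written not go through.

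The left mate $\alpha_!\colon p_!v^\ast\Rightarrow w^\ast q_!$ is a natural transformation between functors $\D(B)\to\D(C)$; neither its source nor target involves $\D(A)$. When you paste the mates, $\alpha_!$ is whiskered by $c^\ast$ on the \emph{left}, not by $\pr^\ast$: for $X\in\D(B)$ the composite $(\alpha\odot\gamma)_!$ reads
\[
\pi_{(p/c),!}\pr^\ast v^\ast X \xrightarrow{\ \gamma_{!,v^\ast X}\ } c^\ast p_! v^\ast X \xrightarrow{\ c^\ast\alpha_{!,X}\ } c^\ast w^\ast q_! X.
\]
So assuming the paste and $\gamma_!$ are isomorphisms for every $c$ gives you that $c^\ast\alpha_{!,X}$ is an isomorphism for every $c\in C$. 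Now apply Der2 in $\D(C)$---where $\alpha_{!,X}$ actually lives---to conclude that $\alpha_{!,X}$ is an isomorphism. That is the whole argument; there is no need (and no way) to ``evaluate $\alpha_!$ at $a\in A$'' or to invoke the objects $(a,\id_{p(a)})\in(p/c)$. Your proposed detection principle via the family $\{\pr_c\colon (p/c)\to A\}$ is aimed at the wrong category. Once you fix this identification, the proof is exactly the paper's.
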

\begin{proof}
There is a small technical point that we will see proven shortly: though the pasting in $\Dia$ is $\alpha\odot\gamma$, after applying $\D$ we obtain a pasting $\gamma^\ast\odot\alpha^\ast$ because of the contravariance with respect to functors and covariance with respect to natural transformations. In our notation, we have $(\alpha\odot\gamma)_!=\gamma_!\odot\alpha_!$ by Proposition~\ref{prop:calculusofmates}(1). For both directions of the proof, by Der4L we know that $\gamma_!$ is a natural isomorphism.

For the forward direction, assume our original square was $\D$-exact, so that $\alpha_!$ is a natural isomorphism. Then by the reasoning above, the natural trans-\linebreak formation $(\alpha\odot\gamma)_!$ is the pasting of two natural isomorphisms, thus is itself a natural isomorphism which proves that the pasting is $\D$-exact.

We now turn to the converse, and assume that the pasting is $\D$-exact for every $c\in C$. We will look at the natural transformation $(\gamma\odot\alpha)_!=\alpha_!\odot\gamma_!$ applied to some $X\in\D(B)$ one step at a time. Applying the derivator $\D$ to Diagram~\ref{dia:horzpastingcancel} we obtain (after rotating)
\begin{equation*}
\vcenter{\xymatrix{
\D(D)\ar[r]^-{w^\ast}\ar[d]_-{q^\ast}&\ar@{}[dl]|\netrans\ar@{}[dl]<-1.25ex>|{\alpha^\ast}\D(C)\ar[r]^-{c^\ast}\ar[d]_-{p^\ast}&\ar@{}[dl]|\netrans\ar@{}[dl]<-1.25ex>|{\gamma^\ast}\D(e)\ar[d]^-{\pi_{(p/c)}^\ast}\\
\D(B)\ar[r]_-{v^\ast}&\D(A)\ar[r]_-{\pr^\ast}&\D((p/c))
}}
\end{equation*}
Taking left mates, we obtain
\begin{equation*}
\vcenter{\xymatrix{
\D(B)\ar[r]^-{q_!}\ar@(d,l)[dr]_-=&\ar@{}[dl]|\netrans\D(D)\ar[r]^-{w^\ast}\ar[d]_-{q^\ast}&\ar@{}[dl]|\netrans\ar@{}[dl]<-1.25ex>|{\alpha^\ast}\D(C)\ar@(r,u)[dr]^-=\ar[d]_-{p^\ast}&\ar@{}[dl]|\netrans\\
{}&\D(B)\ar[r]_-{v^\ast}&\D(A)\ar@(d,l)[dr]_-=\ar[r]_-{p_!}&\D(C)\ar[r]^-{c^\ast}\ar[d]_-{p^\ast}\ar@{}[dl]|\netrans&\ar@{}[dl]|\netrans\ar@{}[dl]<-1.25ex>|{\gamma^\ast}\D(e)\ar[d]_-{\pi_{(p/c)}^\ast}\ar@(r,u)[dr]^-=&{}\ar@{}[dl]|\netrans\\
&&&\D(A)\ar[r]_-{\pr^\ast}&\D((p/c))\ar[r]_-{\pi_{(p/c),!}}&\D(e)
}}
\end{equation*}

The top horizontal pasting is $\alpha_!$ and the bottom is $\gamma_!$. Moreover, the transformation in the middle $p^\ast\Rightarrow p^\ast p_! p^\ast\Rightarrow p^\ast$ is just the identity by the triangle identities of the adjunction $(p_!,p^\ast)$, so we have not introduced anything aberrant in this process. This is the key ingredient in the proof of the compatibility of the calculus of mates with pasting.

Rewriting this diagram with $\alpha_!$ and $\gamma_!$ and rotating it to our preferred orientation we obtain
\begin{equation*}
\vcenter{\xymatrix{
\D(B)\ar[r]^-{v^\ast}\ar[d]_-{q_!}&\ar@{}[dl]|\swtrans\ar@{}[dl]<-1.25ex>|{\alpha_!}\D(A)\ar[r]^-{\pr^\ast}\ar[d]_-{p_!}&\ar@{}[dl]|\swtrans\ar@{}[dl]<-1.25ex>|{\gamma_!}\D((p/c))\ar[d]^-{\pi_{(p/c),!}}\\
\D(D)\ar[r]_-{w^\ast}&\D(C)\ar[r]_-{c^\ast}&\D(e)
}}
\end{equation*}
Starting with any object $X\in\D(B)$, we obtain the following commutative diagram:
\begin{equation*}
\vcenter{\xymatrix@C=3em{
\pi_{(p/c),!}\pr^\ast v^\ast X\ar[d]_-=\ar[r]_-\cong^-{\gamma_{!,v^\ast X}}&c^\ast p_! v^\ast X\ar[r]^-{c^\ast\alpha_{!,X}}&c^\ast w^\ast q_! X\ar[d]^-=\\
\pi_{(p/c),!}(v\text{\,pr})^\ast X\ar[rr]_-\cong^-{(\gamma\odot\alpha)_{!,X}}&&w(c)^\ast q_! X
}}
\end{equation*}
We have isomorphisms where indicated because we know $\D$-exactness in these situations, and the vertical equalities follow from the strict 2-functoriality of any (pre)derivator. Thus $c^\ast\alpha_{!,X}$ must be an isomorphism for any $c\in C$ and $X\in\D(B)$.

In some other context, we might be stuck here, but we have at our disposal the axiom Der2. Rephrasing the axiom slightly for our purposes here, it states that a map~$f\colon X_1\to X_2$ in $\D(C)$ is an isomorphism if and only if $c^\ast f\colon c^\ast X_1\to c^\ast X_2$ is an isomorphism in $\D(e)$ for all $c\in C$. We apply this axiom to $f=\alpha_{!,X}$, $X_1=p_!v^\ast X$,\linebreak and $X_2=w^\ast q_! X$ to conclude that $\alpha_{!,X}$ is actually an isomorphism in $\D(C)$. Therefore $\alpha_!$ itself is a natural isomorphism without whiskering with $c^\ast$ and thus the corresponding square is $\D$-exact.
\end{proof}

\begin{lemma}\label{lemma:verthomotopyexactcancel}
In the situation of Lemma~\ref{lemma:horzhomotopyexactcancel}, we might have pasted vertically instead of horizontally to obtain for any $b\in B$:
\begin{equation*}
\vcenter{\xymatrix{
(b/v)\ar[r]^-{\pi_{(b/v)}}\ar[d]_-\pr&e\ar[d]^-b\ar@{}[dl]|\swtrans\ar@{}[dl]<-1.25ex>|\beta\\
A\ar[r]^-{v}\ar[d]_-{p}&B\ar[d]^-{q}\ar@{}[dl]|\swtrans\ar@{}[dl]<-1.25ex>|\alpha\\
C\ar[r]_-{w}&D
}}
\end{equation*}
Then if $\D$ is a \emph{right} derivator, the same conclusion holds: the total pasting is $\D$-exact for every $b\in B$ if and only if our original square is $\D$-exact.
\end{lemma}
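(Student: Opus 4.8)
The plan is to transcribe the proof of Lemma~\ref{lemma:horzhomotopyexactcancel} with the roles of rows and columns interchanged: we paste a comma square \emph{vertically} instead of horizontally, the comma category $(b/v)$ is of the type handled by the axiom Der4R rather than Der4L, we manipulate right mates in place of left mates, and in the endgame we apply the conservativity axiom Der2 at the category $B$ rather than at $C$.

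First I would record the total pasting appearing in the statement: stacking the $(b/v)$-square on top of the original square and composing $\beta$ with $\alpha$ (suitably whiskered) produces the square
\begin{equation*}
\vcenter{\xymatrix{
(b/v)\ar[r]^-{\pi_{(b/v)}}\ar[d]_-{p\,\pr}&e\ar[d]^-{q\,b}\ar@{}[dl]|\swtrans\\
C\ar[r]_-{w}&D
}}
\end{equation*}
The $(b/v)$-square is an oriented pullback square of exactly the shape governed by Der4R, so its right mate $\beta_\ast\colon b^\ast v_\ast\Rightarrow\pi_{(b/v),\ast}\pr^\ast$ is a natural isomorphism; this is the counterpart here of the sentence ``by Der4L we know that $\gamma_!$ is a natural isomorphism'' in the proof of Lemma~\ref{lemma:horzhomotopyexactcancel}. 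By Proposition~\ref{prop:calculusofmates}(1) the right mate of the total pasting is the pasting of $\beta_\ast$ with the right mate of the original square, so the forward implication is immediate: if the original square is $\D$-exact, its right mate is a natural isomorphism, hence so is the pasting of the two, and the total square is $\D$-exact for every $b\in B$.

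For the converse I would follow the bookkeeping in the proof of Lemma~\ref{lemma:horzhomotopyexactcancel} step by step. Assuming every total pasting is $\D$-exact, one applies $\D$ to the stacked diagram, takes right mates one whiskering at a time, and uses that the intermediate transformation assembled from the unit and counit of the adjunction $(v^\ast,v_\ast)$ --- of the form $v^\ast\Rightarrow v^\ast v_\ast v^\ast\Rightarrow v^\ast$ --- collapses to the identity by a triangle identity; this is the compatibility of the calculus of mates with pasting, exactly as in the cited proof. The result is, for each $b\in B$ and each object $X$ of the relevant level, a commutative square every edge of which except the component at $X$ of $b^\ast$ applied to the right mate of the original $\alpha$-square is a natural isomorphism (indeed two of the edges are identities), forcing that component to be an isomorphism too. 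Since this holds for all $b\in B$, the axiom Der2 applied at $B$ upgrades it to: the right mate of the original square is itself a natural isomorphism, \ie the original square is $\D$-exact.

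The main obstacle is the same one present in Lemma~\ref{lemma:horzhomotopyexactcancel}: threading the calculus of mates cleanly through the pasting. The point that requires care in the vertical case is matching the mate supplied by Der4R for the $(b/v)$-comma square --- which is expressed in terms of $v_\ast$ evaluated via $b^\ast$ --- against the appropriate mate of the original square, so that the two genuinely paste together into the single commutative square that the conservativity step then consumes; one must be deliberate about which of the several mates of $\alpha^\ast$ is being tracked throughout (and, if one insists on the formulation of $\D$-exactness in Definition~\ref{defn:homotopyexactsquare}, invoke the invariance of $\D$-exactness under transposing the square). Once the orientations are pinned down so that this square closes up, the argument is a routine transcription of the horizontal case.
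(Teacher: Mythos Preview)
Your proposal is correct and follows essentially the same approach as the paper: use Der4R to see that $\beta_\ast$ is an isomorphism, invoke the compatibility of mates with pasting, and then close up with Der2 at $B$. The paper presents this by first rotating the vertical pasting into a horizontal one (so that the argument literally reads as in Lemma~\ref{lemma:horzhomotopyexactcancel} with right mates in place of left), and then records the resulting commutative square; your explicit flag about tracking which right mate of $\alpha^\ast$ is in play and the need to reconcile it with Definition~\ref{defn:homotopyexactsquare} via transposition is exactly the small subtlety the paper's rotation is silently handling.
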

\begin{proof}
The proof here involves instead the right mates of $\alpha$ and $\beta$ and that the top square is homotopy exact by Der4R. As such, we rotate our vertical pasting to become a horizontal one:
\begin{equation*}
\vcenter{\xymatrix{
(b/v)\ar[r]^-\pr\ar[d]_-{\pi_{(b/v)}}&A\ar[d]_-{v}\ar[r]^-p\ar@{}[dl]|\netrans\ar@{}[dl]<-1.25ex>|\beta&C\ar[d]^-w\ar@{}[dl]|\netrans\ar@{}[dl]<-1.25ex>|\alpha\\
e\ar[r]_-b&B\ar[r]_-q\ar[r]&D
}}
\end{equation*}

Following a sort of reasoning dual to the proof of Lemma~\ref{lemma:horzhomotopyexactcancel}, one obtains a commutative diagram for any $X\in \D(C)$,
\begin{equation*}
\vcenter{\xymatrix@C=3em{
\ar[d]_-=b^\ast q^\ast w_\ast X\ar[r]^-{b^\ast \alpha_{\ast,X}}&b^\ast v_\ast p^\ast X\ar[r]_-\cong^-{\beta_{\ast,p^\ast X}}&\pi_{(b/v),\ast}\pr^\ast p^\ast X\ar[d]^-=\\
q(b)^\ast w_\ast X\ar[rr]_-\cong^-{(\beta\odot\alpha)_{\ast,X}}&&\pi_{(b/v),\ast}(p\text{\,pr})^\ast X
}}
\end{equation*}
This proves that $\alpha_{\ast,X}$ is a pointwise isomorphism, thus by Der2, $\alpha_\ast$ is a natural isomorphism.
\end{proof}

If $\D$ is a (left and right) derivator, Lemma~\ref{lemma:horzhomotopyexactcancel} and  Lemma~\ref{lemma:verthomotopyexactcancel} both hold. However, we should not expect Lemma~\ref{lemma:verthomotopyexactcancel} to hold in an arbitrary left derivator, as the requisite right Kan extensions to form the right mate have no guarantee to exist. See \cite[Remarque~4.23]{Mal12} for a similar observation.

Nonetheless, the following theorem tells us that there is less of a distinction between Der4L and Der4R than this caution implies.

\begin{theorem}[Compare~Proposition~1.26,~\cite{Gro13} and~Th\'eor\`eme~4.24,~\cite{Mal12}]\label{thm:commaexact}
Let $\D$ be a semiderivator satisfying Der3L. Then $\D$ satisfies Der4L if and only if for any arbitrary comma square
\begin{equation*}\label{dia:laxpullback}
\vcenter{\xymatrix{
(u_1/u_2)\ar[r]^-{\pr_1}\ar[d]_-{\pr_2}&J_1\ar[d]^-{u_1}\ar@{}[dl]|\swtrans\ar@{}[dl]<-1.25ex>|\alpha\\
J_2\ar[r]_-{u_2}&K
}}
\end{equation*}
the left mate $\alpha_!$ is an isomorphism.
\end{theorem}
\begin{proof}
The backwards direction of this proof is automatic, as Der4L concerns a specific instance of the general oriented pullback square. Thus we assume that $\D$ satisfies Der4L, \ie $\D$ is a left derivator. We wlll begin by using Lemma~\ref{lemma:horzhomotopyexactcancel} and paste horizontally for some $j_2\in J_2$:
\begin{equation*}
\vcenter{\xymatrix{
(\pr_2/j_2)\ar[r]\ar[d]&\ar@{}[dl]|\swtrans\ar@{}[dl]<-1.25ex>|{\text{ex}}(u_1/u_2)\ar[r]^-{\pr_1}\ar[d]_-{\pr_2}&J_1\ar[d]^-{u_1}\ar@{}[dl]|\swtrans\ar@{}[dl]<-1.25ex>|\alpha\\
e\ar[r]_-{j_2}&J_2\ar[r]_-{u_2}&K
}}
\end{equation*}
We do not know that the outside square is $\D$-exact yet, so we will paste again on the left the comma square that would make the total pasting $\D$-exact by Der4L:
\begin{equation}\label{dia:doubleprojectionpaste}
\vcenter{\xymatrix{
(u_1/u_2(j_2))\ar[r]^-r\ar[d]&\ar@{}[dl]|\swtrans(\pr_2/j_2)\ar[r]\ar[d]&\ar@{}[dl]|\swtrans\ar@{}[dl]<-1.25ex>|{\text{ex}}(u_1/u_2)\ar[r]^-{\pr_1}\ar[d]_-{\pr_2}&J_1\ar[d]^-{u_1}\ar@{}[dl]|\swtrans\ar@{}[dl]<-1.25ex>|\alpha\\
e\ar[r]&e\ar[r]_-{j_2}&J_2\ar[r]_-{u_2}&K
}}
\end{equation}
Now the total pasting is $\D$-exact by Der4L, up to describing the map $r$. The codomain of $r$ consists of an object $(j'_1,j'_2,f\colon u_1(j'_1)\to u_2(j'_2))$ of $(u_1/u_2)$ along with a map $g\colon \pr_2(j'_1,j'_2,f)=j'_2\to j_2$. The domain of $r$ consists of an object $j''_1\in J_1$ and a map $h\colon u_1(j''_1)\to u_2(j_2)$ (where $j_2$ is fixed). We therefore define $r$ by
\begin{equation*}
r(j''_1,h\colon u_1(j''_1)\to u_2(j_2))=(j''_1,j_2,h\colon u_1(j''_1)\to u_2(j_2),j_2=j_2)
\end{equation*}

We claim that this map is a right adjoint. The left adjoint is given by
\begin{equation*}
\ell\left(j'_1,j'_2,f\colon u_1(j'_1)\to u_2(j'_2),g\colon j'_2\to j_2\right)=(j'_1,u_2(g)\circ f\colon u_1(j'_1)\to u_2(j'_2)\to u_2(j_2))
\end{equation*}
To sketch the bijection on hom-sets, let us describe the set\linebreak$\Hom_{(u_1/u_2(j_2))}(\ell(j'_1,j'_2,f,g),(j''_1,h))$. The maps are maps $a\colon j'_1\to j''_1$ in $J_1$ such that the following diagram commutes:
\begin{equation*}
\vcenter{\xymatrix{
u_1(j'_1)\ar[d]_-f\ar[r]^-{u_1(a)}&u_1(j''_1)\ar[d]^-h\\
u_2(j'_2)\ar[r]_-{u_2(g)}&u_2(j_2)
}}
\end{equation*}

Meanwhile, maps in $\Hom_{(\pr_2/j_2)}((j'_1,j'_2,f,g),r(j''_1,h))$ are maps in $(u_1/u_2)$ of the form $b\colon (j'_1,j'_2,f)\to~(j''_1,j_2,h)$, which themselves are certain pairs $b_1\colon j'_1\to j''_1$ and $b_2\colon j'_2\to j_2$ in $J_1$ and $J_2$ respectively, all making the following diagrams commute:
\begin{equation*}
\vcenter{\xymatrix{
u_1(j'_1)\ar[d]_-{u_1(b_1)}\ar[r]^-f &u_2(j'_2)\ar[d]^-{u_2(b_2)}\\
u_1(j''_1)\ar[r]_-h &u_2(j_2)
}}\quad\text{and}\quad
\vcenter{\xymatrix@R=0.5em{
j'_2\ar[dd]_-{b_2}\ar[dr]^-g\\
&j_2\\
j_2\ar[ur]_-{=}
}}
\end{equation*}
Commutativity of the second diagram means that $b_2=g$ is forced, thus the data is\linebreak $b_1\colon j'_1\to~j''_1$ making the square commute. But this is exactly the same as the other hom-set, proving a bijection. That this bijection is natural in both arguments is easy to verify. 

Since $r$ is a right adjoint, by Proposition~\ref{prop:adjointhomotopyexact} we know that the lefthand square of Diagram~\ref{dia:doubleprojectionpaste} is $\D$-exact. Thus in that diagram, the total pasting is $\D$-exact, as are the middle and lefthand squares, which implies that the righthand square is $\D$-exact as well, completing the proof.
\end{proof}

The dual statement is also true: for any right derivator $\D$, the right mate $\alpha_\ast$ as in the preceding proposition will be an isomorphism, which will use in the proof Lemma~\ref{lemma:verthomotopyexactcancel} instead. While Lemmata~\ref{lemma:horzhomotopyexactcancel}~and~\ref{lemma:verthomotopyexactcancel} seemed to be left- and right-dependent (respectively), the comma square of Theorem~\ref{thm:commaexact} is $\D$-exact for any half derivator $\D$. We have the following conclusion, recalling the hesitation of Remark~\ref{rk:homotopyexact} to apply the term `homotopy exact'.
\begin{cor}\label{cor:commaalwaysexact}
The oriented pullback square
\begin{equation*}
\vcenter{\xymatrix{
(u_1/u_2)\ar[r]^-{\pr_1}\ar[d]_-{\pr_2}&J_1\ar[d]^-{u_1}\ar@{}[dl]|\swtrans\ar@{}[dl]<-1.25ex>|\alpha\\
J_2\ar[r]_-{u_2}&K
}}
\end{equation*}
is $\D$-exact for any \emph{half} derivator $\D$.
\end{cor}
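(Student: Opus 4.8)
The plan is to observe that this corollary is just a repackaging of Proposition~\ref{prop:commaexact} together with its dual, once we recall what $\D$-exactness means for a one-sided object. As Remark~\ref{rk:homotopyexact} records, when $\D$ is only a left derivator the square is $\D$-exact precisely when the left mate $\alpha_!$ is a natural isomorphism, and when $\D$ is only a right derivator it is $\D$-exact precisely when the right mate $\alpha_\ast$ is a natural isomorphism; unlike in the full-derivator case, we are not free to pass between the two mates via Proposition~\ref{prop:calculusofmates}(3), since a half derivator furnishes only one of them. So I would split into these two cases and treat each with the appropriate one-sided machinery.

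First, suppose $\D$ is a left derivator; then it is in particular a semiderivator satisfying Der3L, and it satisfies Der4L. In this case Proposition~\ref{prop:commaexact} applies verbatim: since $\D$ satisfies Der4L, the left mate $\alpha_!$ of any comma square $(u_1/u_2)$ is a natural isomorphism, which is exactly the condition for $\D$-exactness of a left derivator. Nothing further is needed here.

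Second, suppose $\D$ is a right derivator, satisfying Der3R and Der4R. Here I would run the proof of Proposition~\ref{prop:commaexact} in its dual form: paste the comma square $(b/\pr_1)$ (or the relevant one) onto the \emph{top} of the comma square as in Lemma~\ref{lemma:verthomotopyexactcancel}, for an arbitrary object $b$, then paste on the further comma square whose total pasting is forced to be $\D$-exact by Der4R. The comparison functor $r$ between the two middle pieces, which was a \emph{right} adjoint in the original argument, becomes a \emph{left} adjoint after dualising, so the left-adjoint clause of Proposition~\ref{prop:adjointhomotopyexact} shows that the newly pasted square is $\D$-exact. Cancelling the known-exact squares off the total pasting — using compatibility of right mates with pasting, Proposition~\ref{prop:calculusofmates}(1), and Der4R for the exactness of the comma squares — yields that $b^\ast\alpha_{\ast,X}$ is an isomorphism for every $b$ and every $X$, whence Der2 promotes this to $\alpha_\ast$ itself being a natural isomorphism.

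The only real subtlety — and it is bookkeeping rather than a genuine obstacle — is keeping the left/right story straight: Lemma~\ref{lemma:horzhomotopyexactcancel}, and hence Proposition~\ref{prop:commaexact} as stated, are proved with left mates and so serve the left derivator case, while the right derivator case must instead invoke the right-mate analogue Lemma~\ref{lemma:verthomotopyexactcancel} and the left-adjoint half of Proposition~\ref{prop:adjointhomotopyexact}. Once both cases are in hand, the corollary follows immediately from the definition of $\D$-exactness recalled in Remark~\ref{rk:homotopyexact}.
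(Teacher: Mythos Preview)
Your proposal is correct and matches the paper's approach exactly: the paper treats the corollary as an immediate consequence of Proposition~\ref{prop:commaexact} for the left derivator case together with its dual for the right derivator case, the latter using Lemma~\ref{lemma:verthomotopyexactcancel} in place of Lemma~\ref{lemma:horzhomotopyexactcancel} and the left-adjoint clause of Proposition~\ref{prop:adjointhomotopyexact} for the comparison functor. Your outline of the dual argument is accurate and, if anything, more detailed than the paper's one-sentence indication.
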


We have two more classes of squares which are $\D$-exact for any half derivator $\D$.

\begin{prop}[Compare Proposition~1.24,~\cite{Gro13}]\label{prop:grothendieckexactsquare} Consider a strict pullback square in $\Dia$ of the form
\begin{equation*}
\vcenter{\xymatrix{
A\ar[r]^-{v}\ar[d]_-{p}&B\ar[d]^-{q}\ar@{}[dl]|\swtrans\ar@{}[dl]<-1.25ex>|\id\\
C\ar[r]_-{w}&D
}}
\end{equation*}
Then this square is $\D$-exact for every left derivator $\D$ whenever $q$ is a Grothendieck opfibration. It is $\D$-exact for every right derivator $\D$ whenever $w$ is a Grothendieck fibration.
\end{prop}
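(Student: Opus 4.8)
The plan is to prove the first assertion (for every left derivator $\D$, using that $q$ is a Grothendieck opfibration) in full; the second then follows by running the argument dually — replacing Lemma~\ref{lemma:horzhomotopyexactcancel} by Lemma~\ref{lemma:verthomotopyexactcancel}, left mates by right mates, the comma categories $(u/k)$ by $(k/u)$, and cocartesian lifts by cartesian lifts. The engine of the proof is an auxiliary fact I would isolate first: \emph{for any Grothendieck opfibration $r\colon E\to F$ and any $f\in F$, the strict-fibre square
\begin{equation*}
\vcenter{\xymatrix{
r\inv(f)\ar[r]^-{\iota_f}\ar[d]_-{\pi}&E\ar[d]^-{r}\ar@{}[dl]|\swtrans\ar@{}[dl]<-1.25ex>|{\id}\\
e\ar[r]_-{f}&F
}}
\end{equation*}
is $\D$-exact}, where $\iota_f$ is the inclusion of the strict fibre. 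To get this I would paste onto the oriented pullback square of $r$ and $f$ (which is $\D$-exact by Der4L) the square with top map $\iota'_f\colon r\inv(f)\to(r/f)$, $x\mapsto(x,\id_f)$, both vertical maps $\pi$, and bottom map $\id_e$: the functor $L_f\colon(r/f)\to r\inv(f)$ sending $(x,g\colon r(x)\to f)$ to the target $g_\ast x$ of a cocartesian lift of $g$ at $x$ is left adjoint to $\iota'_f$, so $\iota'_f$ is a right adjoint and that square is $\D$-exact by Proposition~\ref{prop:adjointhomotopyexact}. Since $\pr\circ\iota'_f=\iota_f$, the pasting is exactly the strict-fibre square, and a pasting of $\D$-exact squares is $\D$-exact by Proposition~\ref{prop:calculusofmates}(1).

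For the square of the proposition, I would invoke Lemma~\ref{lemma:horzhomotopyexactcancel}: it is $\D$-exact provided that, for every $c\in C$, the square
\begin{equation*}
\vcenter{\xymatrix{
(p/c)\ar[r]^-{v\,\pr}\ar[d]_-{\pi}&B\ar[d]^-{q}\ar@{}[dl]|\swtrans\\
e\ar[r]_-{w(c)}&D
}}
\end{equation*}
— whose transformation at an object $(a,g\colon p(a)\to c)$ is $w(g)\colon q(v(a))=w(p(a))\to w(c)$ — is $\D$-exact. Fix $c$ and set $d=w(c)$. Because $A=C\times_D B$ and $q(b_0)=d$ for $b_0\in q\inv(d)$, the rule $S\colon q\inv(d)\to(p/c)$, $b_0\mapsto(c,b_0,\id_c)$, is a well-defined functor; and since $q$ is an opfibration $S$ admits a left adjoint $T$, sending $(c',b,g)$ to the target of a cocartesian lift of $w(g)\colon q(b)=w(c')\to d$ at $b$ (the functoriality of $T$ and the triangle identities for $T\dashv S$ amount to a direct computation resting on the universal property of cocartesian lifts). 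Hence $S$ is a right adjoint, so the square with top map $S$, vertical maps $\pi$, and bottom map $\id_e$ is $\D$-exact by Proposition~\ref{prop:adjointhomotopyexact}; since $v\,\pr\circ S=\iota_d$, pasting it on the left of the displayed square produces exactly the strict-fibre square of $q$ over $d$, which is $\D$-exact by the auxiliary fact.

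The last step is to run the calculus of mates backwards through this pasting. By Proposition~\ref{prop:calculusofmates}(1), the left mate of the total pasting equals the left mate of the $(p/c)$--$B$ square vertically composed with the left mate of the $S$-square whiskered by $(v\,\pr)^\ast$. The total mate is an isomorphism (auxiliary fact) and the whiskered $S$-mate is an isomorphism (Proposition~\ref{prop:adjointhomotopyexact}), and — this is the point — the $(p/c)$--$B$ mate occurs \emph{un-whiskered}, since that square and the total pasting share the right leg $q$ and the lower-left corner $e$ and so their mates have the same source and target; hence the $(p/c)$--$B$ mate is itself an isomorphism, i.e.\ that square is $\D$-exact. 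Since $c$ was arbitrary this finishes the opfibration case, and the fibration case is its dual. The hard part is not conceptual but bookkeeping: correctly exhibiting the two cocartesian-transport adjunctions ($L_f\dashv\iota'_f$ and $T\dashv S$) and, in the final step, keeping track of which mate gets whiskered so that no extra appeal to Der2 is needed.
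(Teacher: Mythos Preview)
Your proof is correct and follows essentially the same route as the paper's: both reduce via Lemma~\ref{lemma:horzhomotopyexactcancel} to the $(p/c)$--$B$ square, paste on the left a strict-fibre inclusion (a right adjoint by cocartesian transport), and identify the resulting total with the $\D$-exact strict-fibre square of $q$ over $w(c)$. The only organizational differences are that the paper uses $s_c\colon p^{-1}(c)\to(p/c)$ (invoking that $p$ is an opfibration as a pullback of $q$) and then rewrites the total as a second three-square pasting $p^{-1}(c)\xrightarrow{\cong}q^{-1}(w(c))\to(q/w(c))\to B$, whereas you identify $p^{-1}(c)\cong q^{-1}(d)$ at the outset and isolate the strict-fibre exactness as a preliminary lemma; you are also more explicit than the paper about why the $(p/c)$--$B$ mate appears un-whiskered in the composite (the paper's phrase ``we need only prove that the outside square is $\D$-exact'' leaves this step to the reader).
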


\begin{proof}
We prove the left case and indicate the necessary modifications for the right case. Our general strategy is to horizontally paste using Lemma~\ref{lemma:horzhomotopyexactcancel} and make sure we are adding $\D$-exact squares every time.

To begin, we note that that both Grothendieck fibrations and opfibrations are preserved under pullback, so we have that both $q$ and $p$ are opfibrations. We paste by the $\D$-exact square from Der4L associated to any $c\in C$:
\begin{equation*}
\vcenter{\xymatrix{
(p/c)\ar[r]^-\pr\ar[d]_-{\pi_{(p/c)}}&A\ar[r]^-{v}\ar[d]_-{p}\ar@{}[dl]|\swtrans\ar@{}[dl]<-1.25ex>|{\text{ex}}&B\ar[d]^-{q}\ar@{}[dl]|\swtrans\ar@{}[dl]<-1.25ex>|\id\\
e\ar[r]_-c&C\ar[r]_-{w}&D
}}
\end{equation*}
The outside square is not yet comprehensible, however. We therefore consider the strict fibre of $p$ over $c\in C$, namely, the non-full subcategory $p^{-1}(c)\subset A$ on $a\in A$ such that $p(a)=c$ and maps $f\colon a\to a'$ such that $p(f)=\id_c$. There is a map $s_c\colon p^{-1}(c)\to (p/c)$ such that $s(a)=(a,\id_c)$ which compares these two categories. If $p$ is a Grothendieck opfibration, then in particular $p\colon A\to C$ is a precofibered category (using the terminology from \cite{SGA1}. A precofibered category by definition is one such that these inclusion functors admit a left adjoint for every $c\in C$. Being able to lift the map $p(a)\to c=p(a')$ in $C$ to a cocartesian morphism in $A$ gives the adjoint.

By Proposition~\ref{prop:adjointhomotopyexact} the square pasted on the left below is also $\D$-exact:
\begin{equation}\label{dia:pasting1}
\vcenter{\xymatrix@C=3em{
p^{-1}(c)\ar[r]^-{s_c}\ar[d]_-{\pi_{p^{-1}(c)}}&(p/c)\ar[r]^-\pr\ar[d]_-{\pi_{(p/c)}}\ar@{}[dl]|\swtrans\ar@{}[dl]<-1.25ex>|{\text{ex}}&A\ar[r]^-{v}\ar[d]_-{p}\ar@{}[dl]|\swtrans\ar@{}[dl]<-1.25ex>|{\text{ex}}&B\ar[d]^-{q}\ar@{}[dl]|\swtrans\ar@{}[dl]<-1.25ex>|\id\\
e\ar[r]&e\ar[r]_-c&C\ar[r]_-{w}&D
}}
\end{equation}
We need only prove that the outside square is $\D$-exact, and we do so by creating a pasting equal to the above. Since our original square commuted on the nose, we know that $w(p(a))=q(v(a))$ for any $a\in A$. Therefore the functor $v\colon A\to B$ restricts to the strict fibres $v\colon p^{-1}(c)\to q^{-1}(w(c))$. We can also have a right adjoint functor $s_{w(c)}\colon q^{-1}(w(c))\to (q/w(c))$ because $q$ is a Grothendieck opfibration. In total, we get a pasting in which the middle and right squares are $\D$-exact.
\begin{equation}\label{dia:pasting2}
\vcenter{\xymatrix@C=5em{
p^{-1}(c)\ar[r]^-{v}\ar[d]_-{\pi_{p^{-1}(c)}}&q^{-1}(w(c))\ar[d]_-{\pi_{q^{-1}(w(c))}}\ar[r]^-{s_{w(c)}}\ar@{}[dl]|\swtrans\ar@{}[dl]<-1.25ex>|{\text{ex}}&(q/w(c))\ar[r]^-\pr\ar[d]_-{\pi_{(p/c)}}\ar@{}[dl]|\swtrans\ar@{}[dl]<-1.25ex>|{\text{ex}}&B\ar[d]^-{q}\ar@{}[dl]|\swtrans\ar@{}[dl]<-1.25ex>|{\text{ex}}\\
e\ar[r]&e\ar[r]&e\ar[r]_-{w(c)}&D
}}
\end{equation}
This is the same total pasting as Diagram~\ref{dia:pasting1}. To see this, let $a\in p^{-1}(c)$. The top composition of Diagram~\ref{dia:pasting1} yields $q(v(\pr(s_c(a))))=q(v(a))$, and the bottom composition yields $w(p(a))$. Because our original square commuted on the nose, these are equal. Moreover, the map $q(v(a))\to w(p(a))$ must be the identity: the only data comes from the structure map in $s_c(a)=(a,\id_c)$.

For Diagram~\ref{dia:pasting2}, the top composition gives $q(\pr(s_{w(c)}(v(a))))=q(v(a))$ and the bottom gives $w(c)=w(p(a))$ again. The data of the map $q(v(a))\to w(p(a))$ is the structure map in $s_{w(c)}(v(a))=(v(a),q(v(a))=w(c))$.

The final thing to check is that the lefthand square of Diagram~\ref{dia:pasting2} is exact. But this is exactly our original pullback square of categories restricted to $c\in C$ and its image $w(c)\in D$. This means that $v\colon p\inv(c)\to q\inv(w(c))$ is an isomorphism (since it is the pullback of $e\cong e$), so it is a right adjoint functor and by Proposition~\ref{prop:adjointhomotopyexact} we are done.\

In the case that $\D$ is a right derivator, we paste vertically using the Der4R square associated to $b\in B$ and use Lemma~\ref{lemma:verthomotopyexactcancel} instead. That $u,v$ are Grothendieck fibrations (and a fortiori prefibred categories) means that the inclusion of the strict fibres $s_b\colon v^{-1}(b)\to (v/b)$ and $s_{q(b)}\colon w\inv(q(b))\to (w/q(b))$ are \emph{left} adjoints, allowing for Proposition~\ref{prop:adjointhomotopyexact} again.
\end{proof}

\begin{remark}
As a first remark, a version of this proposition can be read from \cite[Lemme~3.12]{Mal12} when $\D$ gives rise to a basic localiser.

Second, we did not actually require the full strength of a Grothendieck (op)fibration, as pre(co)fibered categories are still preserved under pullbacks, see \cite[D\'efinition~6.1 and Corollaire~6.9]{SGA1}. Maltsiniotis also proves his lemma under this weaker condition.

In practice we will run into Grothendieck (op)fibrations only and will not need these weaker hypotheses, but it is good to keep our options open.
\end{remark}
We recover one more result in the half derivator case.

\begin{prop}[Compare Proposition~1.20,~\cite{Gro13}]\label{prop:fullyfaithfulkanextension}
Let $u\colon J\to K$ be a fully faithful functor. Then the following commutative square in $\Dia$ is $\D$-exact for any half derivator $\D$:
\begin{equation*}
\vcenter{\xymatrix{
J\ar[r]^-{\id_J}\ar[d]_-{\id_J}&J\ar[d]^-u\ar@{}[dl]|\swtrans\ar@{}[dl]<-1.25ex>|{\id}\\
J\ar[r]_-u&K
}}
\end{equation*}
Specifically, the left mate is the counit of the $(u^\ast,u_\ast)$ adjunction and right mate is the unit of the $(u^\ast,u_\ast)$ adjunction. Since these are natural isomorphisms, we have that $u_\ast,u_!\colon\D(J)\to\D(K)$ are both fully faithful (when they exist).
\end{prop}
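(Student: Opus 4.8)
The plan is to deduce the $\D$-exactness of this square from oriented pullback squares we already control, via the comma-pasting lemmata, and then to recognise its mate as a unit or counit.

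Suppose first that $\D$ is a left derivator; the right case will be entirely dual, with Lemma~\ref{lemma:verthomotopyexactcancel} in place of Lemma~\ref{lemma:horzhomotopyexactcancel}. Pasting on the left the comma category associated to $c\in J$ (note that here the left-hand functor is $\id_J$, so this comma category is $(\id_J/c)$), Lemma~\ref{lemma:horzhomotopyexactcancel} reduces $\D$-exactness to that of the total pasting
\[
\xymatrix{
(\id_J/c)\ar[r]^-{\pr}\ar[d]_-{\pi}&J\ar[d]^-u\ar@{}[dl]|\swtrans\\
e\ar[r]_-{u(c)}&K
}
\]
for every $c\in J$, where the pasted $2$-cell sends $(j,g\colon j\to c)$ to $u(g)\colon u(j)\to u(c)$. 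Here I would use that $u$ is fully faithful: postcomposition with $u$ is a bijection $\Hom_J(j,c)\xrightarrow{\sim}\Hom_K(u(j),u(c))$, so $(j,g)\mapsto(j,u(g))$ is an isomorphism of categories $(\id_J/c)\xrightarrow{\sim}(u/u(c))$ over $J$, and it carries the canonical comma-square $2$-cell $\alpha_{(j,f)}=f$ to the pasted $2$-cell above. Thus, as a square-with-$2$-cell, the display is isomorphic to the oriented pullback square of the cospan $J\xrightarrow{u}K\xleftarrow{u(c)}e$; that square is $\D$-exact by Corollary~\ref{cor:commaalwaysexact} (indeed it is literally the square of Der4L), and since $\D$-exactness is unaffected by precomposing a square with an isomorphism of categories, the pasting, hence our original square, is $\D$-exact.

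It remains to identify the mate and conclude. Unwinding the mate of the identity $2$-cell $\id_u^\ast$ across this square — routine, but the step most prone to error, as one must keep track of which edge is whiskered against which (co)unit and in which direction — one finds that the relevant mate is, according as $\D$ is a left or a right derivator, the unit $\id_{\D(J)}\Rightarrow u^\ast u_!$ of the adjunction $(u_!,u^\ast)$, respectively the counit $u^\ast u_\ast\Rightarrow\id_{\D(J)}$ of the adjunction $(u^\ast,u_\ast)$. Having just shown that this mate is invertible, the standard criterion — a left adjoint is fully faithful precisely when its unit is invertible, a right adjoint precisely when its counit is invertible — yields that $u_!$, respectively $u_\ast$, is fully faithful whenever it exists.

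The main obstacle is exactly the bookkeeping of that last step. One can reduce exposure to it by arguing pointwise: by Der2 it suffices to check invertibility of the mate after applying each $j^\ast$, and Der4L together with Corollary~\ref{cor:initiallimit} and the terminal object $(j,\id_j)$ of $(u/u(j))\cong(\id_J/j)$ (dually, an initial object of $(u(j)/u)$) identifies $j^\ast u^\ast u_!X$ with $X_j$; but confirming that this identification is the unit demands the same care.
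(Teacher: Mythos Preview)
Your argument is correct and follows essentially the same route as the paper: paste the comma square for $(\id_J/c)$ on the left via Lemma~\ref{lemma:horzhomotopyexactcancel}, use full faithfulness of $u$ to identify the resulting pasting with the Der4L square for $(u/u(c))$, and conclude. The paper makes the isomorphism $(u/u(j))\cong(\id_J/j)$ into an explicit third pasted square (invoking Proposition~\ref{prop:adjointhomotopyexact} since isomorphisms are adjoints), whereas you absorb it directly into the identification; your version is slightly more economical, and you also supply the mate identification and the fully-faithful conclusion that the paper leaves implicit.
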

\begin{proof}
We prove the case where $\D$ is a left derivator and will use a similar proof to that of Theorem~\ref{thm:commaexact}. We start by pasting the $\D$-exact comma square on the left associated to some $j\in J$:
\begin{equation*}
\vcenter{\xymatrix@C=4em{
(\id_J/j)\ar[r]^-{\pr}\ar[d]_-{\pi_{(\id_J/j)}}&J\ar[r]^-{\id_J}\ar[d]_-{\id_J}\ar@{}[dl]|\swtrans\ar@{}[dl]<-1.25ex>|{\text{ex}}&J\ar[d]^-u\ar@{}[dl]|\swtrans\ar@{}[dl]<-1.25ex>|{\id}\\
e\ar[r]_-j&J\ar[r]_-u&K
}}
\end{equation*}
We now want to think about the category $(\id_J/j)$. Its objects are $j'\in J$ along with a map $f\colon j'\to j$, and its maps are morphisms over $j$. The functor $u\colon J\to K$ induces a functor $(\id_J/j)\to (u/u(j))$, where $(j',f)$ gets mapped to $(j',u(f))$. This map is an isomorphism of categories because $u$ is fully faithful: any map $u(j')\to u(j)$ in $K$ must come from a map $j'\to j$ in $J$ and uniquely so. Let $v\colon (u/u(j))\to (\id_J/j)$ be the inverse of this functor. Pasting again, we have
\begin{equation}\label{dia:threesquares}
\vcenter{\xymatrix@C=4em{
(u/u(j))\ar[r]^-v_-\cong\ar[d]_-{\pi_{(u/u(j))}}&(\id_J/j)\ar[r]^-{\pr}\ar[d]_-{\pi_{(\id_J/j)}}\ar@{}[dl]|\swtrans\ar@{}[dl]<-1.25ex>|{\text{ex}}&J\ar[r]^-{\id_J}\ar[d]_-{\id_J}\ar@{}[dl]|\swtrans\ar@{}[dl]<-1.25ex>|{\text{ex}}&J\ar[d]^-u\ar@{}[dl]|\swtrans\ar@{}[dl]<-1.25ex>|{\id}\\
e\ar[r]_-{\id_e}&e\ar[r]_-j&J\ar[r]_-u&K
}}
\end{equation}
The square we have pasted is homotopy exact because $v$ is an isomorphism, so in particular a right adjoint functor and Proposition~\ref{prop:adjointhomotopyexact} applies. Using two applications of Lemma~\ref{lemma:horzhomotopyexactcancel}, our original square is homotopy exact if and only if the total pasting is homotopy exact.

What is this total horizontal pasting? It is
\begin{equation*}
\vcenter{\xymatrix{
(u/u(j))\ar[r]^-\pr\ar[d]_-\pi&J\ar[d]^-u\ar@{}[dl]|\swtrans\ar@{}[dl]<-1.25ex>|{\text{ex}}\\
e\ar[r]_-{u(j)}&K
}}
\end{equation*}
which we identify as the Der4L comma square associated to $u(j)\in K$. This square is therefore $\D$-exact, and we are done.
\end{proof}

\section{Pointed derivators}\label{sec:pointedders} 

The derivators we study in Section~\ref{sec:halfpointed} will satisfy an additional axiom.
\begin{defn}\label{defn:pointed}
A derivator $\D$ is \emph{pointed} if its underlying category $\D(e)$ is pointed. That is, the unique morphism from the intial object to the final object an isomorphism. We will write $0\in\D(e)$ for this zero object.
\end{defn}

\begin{ex}\label{ex:pointedderivators}\ \\\vspace{-1em}
\begin{enumerate}
\item For a bicomplete category $\cC$, the represented derivator $\D_\cC$ is a pointed derivator if $\cC$ is pointed as a category.
\item For a small Grothendieck abelian category $\cA$, the derivator $\D_\cA$ is a pointed derivator.
\item For a combinatorial model category $\cM$, the derivator $\D_\cM$ is pointed if $\cM$ is pointed.
\end{enumerate}
\end{ex}
Note that if $\D$ is a derivator, $\D(e)$ always admits an initial and a final object. Let~$\varnothing$ denote the empty category. By Der1, we have an equivalence of categories 
\begin{equation*}
\vcenter{\xymatrix{
\D(\varnothing)=\D(\varnothing\sqcup\varnothing)\ar[r]^-\sim&\D(\varnothing)\times\D(\varnothing)
}}
\end{equation*}
which implies that $\D(\varnothing)$ is equivalent to $e$. Let $\pi_\varnothing\colon\varnothing\to e$ be the unique functor in $\Dia$. Then the left and right Kan extension along $\pi_\varnothing$ have the form $e\to\D(e)$, so pick out a single object in $\D(e)$. These are the final and initial objects, the final object being the empty limit and the initial object the empty colimit. Whether these are isomorphic is again a property of the derivator $\D$ (specifically, a property of its underlying category).

Definition~\ref{defn:pointed} used to define the adjective \emph{weakly pointed}. There is an obvious way to strengthen this axiom: we ask that for all $K\in\Dia$, $\D(K)$ is a pointed category, and that $u^\ast\colon\D(K)\to\D(J)$ is a pointed functor for any $u\colon J\to K$. If~$\D$ is a full derivator, this is automatic. Let $\pi_K\colon K\to e$ be the projection to the point. Then the pullback~$\pi_K^\ast$ is both a left and right adjoint in any derivator $\D$, so $0_K:=\pi_K^\ast(0)$ should be both an initial and final object in $\D(K)$, meaning that $\D(K)$ is also pointed. Similarly, $u^\ast\colon \D(K)\to\D(J)$ is both a left and right adjoint, so it preserves initial and final objects, so it sends $0_K$ to $0_J$. In fact, $u_!,u_\ast\colon\D(J)\to\D(K)$ are also pointed because each is an adjoint functor.

\begin{remark}
If $\D$ is a half derivator such that $\D(e)$ is pointed, we cannot use the above argument to prove that each $\D(K)$ is pointed and that each $u^\ast$ is a pointed functor. Nonetheless this is true; we prove this in Theorem~\ref{thm:halfstronglypointed} below. We hesitate to call this a left/right pointed derivator, as we reserve this terminology for a more specific notion in Definition~\ref{defn:leftpointedder}. However, we can still prove statements about these objects, so we will continue to write the lengthy phrase `half derivator $\D$ such that $\D(e)$ is pointed' for the remainder of this section.
\end{remark}

There is also a notion of a \emph{strongly pointed} derivator, which was originally called\linebreak Der6 in \cite{Mal07}. In order to state it, we need to recall two particular classes of functors in $\Cat$.

\begin{defn}
Let $u\colon J\to K$ be a fully faithful functor that is injective on objects.
\begin{enumerate}
\item The functor $u$ is a \emph{sieve} if for any morphism $k\to u(j)$ in $K$, $k$ lies in the image of $u$.
\item The functor $u$ is a \emph{cosieve} if for any morphism $u(j)\to k$ in $K$, $k$ lies in the image of $u$.
\end{enumerate}
\end{defn}
\begin{defn}\label{defn:extraadjoint}
\cite[p.6]{Mal07} A derivator $\D$ is \emph{strongly pointed} if for every sieve (resp. cosieve) $u\colon J\to K$ in $\Dia$, $u_\ast$ (resp. $u_!$) admits a right adjoint $u^!$ (resp. left adjoint~$u^?$).
\end{defn}

Asking for these exceptional adjoints is confusing until we know more about the right Kan extension along a sieve or the left Kan extension along a cosieve.

\begin{prop}[Compare Proposition~1.23,~\cite{Gro13}]\label{prop:extensionbyzero}
Let $\D$ be a right (resp.\,left) derivator such that $\D(e)$ is pointed, and let $u\colon J\to K$ be a sieve (resp. cosieve). Then $u_\ast\colon\D(J)\to\D(K)$ (resp. $u_!$) is fully faithful, with essential image $X\in \D(K)$ such that $k^\ast X\cong 0$ for all $k\in K\setminus u(J)$.
\end{prop}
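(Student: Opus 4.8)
The plan is to prove the sieve case directly; the cosieve case follows by the evident dualization (replacing $u_\ast$ by $u_!$, $(k/u)$ by $(u/k)$, initial objects by final objects, and Der4R by Der4L), so I would only sketch it at the end. Throughout, write $V\subseteq\Ob(K)$ for the image $u(J)$ of the objects of $J$.

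\medskip

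\emph{Step 1: full faithfulness.} Since $u$ is fully faithful by hypothesis, Proposition~\ref{prop:fullyfaithfulkanextension} applies and tells us immediately that the square with corners $J,J,J,K$ (and identity natural transformation) is $\D$-exact, hence $u_\ast$ (and $u_!$) is fully faithful whenever it exists. In our situation $\D$ is a pointed derivator, so in particular a right derivator, so $u_\ast$ exists; this disposes of the full faithfulness claim with essentially no new work.

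\medskip

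\emph{Step 2: the essential image is contained in the claimed class.} Let $X\in\D(J)$ and let $k\in K\setminus V$. I would compute $k^\ast u_\ast X$ via Der4R, which identifies it (up to canonical isomorphism) with $\pi_{(k/u),\ast}\,\pr^\ast X$ where $\pi=\pi_{(k/u)}\colon(k/u)\to e$. The key combinatorial observation is that, because $u$ is a sieve and $k$ is not in $V$, there can be no morphism $k\to u(j)$ in $K$ for any $j\in J$: such a morphism would force $k$ into the image of the sieve, contradiction. Hence $(k/u)=\varnothing$. Then $\pi_{(k/u)}$ factors as (in fact equals) $\pi_\varnothing\colon\varnothing\to e$, and as recalled in the discussion preceding Example~\ref{ex:pointedderivators}, the right Kan extension $\pi_{\varnothing,\ast}$ picks out the final object of $\D(e)$, which in a pointed derivator is $0$. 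Therefore $k^\ast u_\ast X\cong 0$ for all $k\notin V$, so every object in the essential image of $u_\ast$ has the stated vanishing property.

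\medskip

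\emph{Step 3: the essential image is all of the claimed class.} This is the step I expect to be the main obstacle, since it requires producing a preimage rather than merely computing. Let $Y\in\D(K)$ satisfy $k^\ast Y\cong 0$ for all $k\in K\setminus V$; I want to show $Y\cong u_\ast u^\ast Y$. There is always a unit map $\eta_Y\colon Y\to u_\ast u^\ast Y$ (the unit of the $(u^\ast,u_\ast)$ adjunction at the level $K$), and by Der2 it suffices to check that $k^\ast\eta_Y$ is an isomorphism in $\D(e)$ for every $k\in K$. For $k\notin V$ both sides vanish: the source by hypothesis on $Y$, the target by Step 2, and one checks the unique map $0\to 0$ is the iso. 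For $k=u(j)\in V$, I would run the computation carried out in the text just before this proposition: the comma category $(u(j)/u)$ has, because $u$ is fully faithful, objects identified with pairs $(j',f'\colon j\to j')$ and maps in $J$ under $j$, and it has the initial object $(j,\id_j)$; the inclusion of this initial object is left adjoint to $\pi_{(u(j)/u)}$, so applying $\D$ and using that $\D$-functors send adjunctions to adjunctions (with sides swapped), $\pi_{(u(j)/u),\ast}$ is canonically $(j,\id_j)^\ast$, and since $\pr\circ(j,\id_j)\colon e\to J$ is the inclusion of $j$, one gets $k^\ast u_\ast u^\ast Y\cong j^\ast u^\ast Y=(uj)^\ast Y=k^\ast Y$ naturally. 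The content I would need to verify with a little care is that under all these identifications the map $k^\ast\eta_Y$ corresponds to the identity of $k^\ast Y$ — this follows by unwinding the triangle identities for the $(u^\ast,u_\ast)$ adjunction together with the compatibility of the mate calculus (Proposition~\ref{prop:calculusofmates}) with the adjunction-preservation used to identify $\pi_{(u(j)/u),\ast}$, exactly the bookkeeping already implicit in the text's derivation of $k^\ast u_\ast X\cong j'^\ast X$. Granting that, Der2 finishes the proof, and the cosieve case is the formal dual.
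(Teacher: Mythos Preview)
Your proposal is correct and follows essentially the same approach as the paper: full faithfulness via Proposition~\ref{prop:fullyfaithfulkanextension}, the vanishing at $k\notin u(J)$ via the empty comma category and Der4R, and the identification $k^\ast u_\ast X\cong j'^\ast X$ at $k=u(j')$ via the initial object of $(k/u)$. The paper actually leaves your Step~3 (the converse inclusion for the essential image) entirely implicit, pointing to~\cite[Proposition~1.23]{Gro13}; your argument via the unit $\eta_Y$ and Der2 is the standard one and fills that gap cleanly.
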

\begin{proof}
We will prove the first case, since these are exactly the right Kan extensions we will need for derivator K-theory. Let $\D$ be a right derivator such that $\D(e)$ is pointed, $u\colon J\to K$ a sieve, and $X\in\D(J)$. Then we can examine $u_\ast X$ pointwise using Der4R, which involves understanding the comma category $(k/u)$ for all $k\in K$. Recall that its objects are pairs $j\in J$ with a morphism $k\to u(j)$ and its maps are maps in $J$ under $k$.

Suppose that $k$ is not in the image of $u$. Then because $u$ is a sieve, there cannot be any maps $k\to u(j)$ for any $j\in J$, so the comma category $(k/u)$ is empty. Hence
\begin{equation*}
k^\ast u_\ast X\cong \pi_{\varnothing,\ast}\pr^\ast X\cong 0
\end{equation*}
because the right Kan extension along $\pi_\varnothing\colon\varnothing\to e$ gives the final object, \ie the zero object when $\D(e)$ is pointed.

Now if $k$ is in the image of $u$, write $k=u(j')$ and consider any\linebreak object ${(j,f\colon k\to u(j))}$ in the comma category $(k/u)$. Because $u$ is fully faithful, any map $f\colon k=u(j')\to u(j)$ in $K$ must be the image of a map $f'\colon j'\to j$ in $J$. Thus we can consider $(k/u)$ to have objects $(j,f'\colon j'\to j)$ and maps in $J$ under $j'$. This category admits the initial object $(j',\id_{j'})$. For any $(j,f'\colon j'\to j)$ we have the unique map from $(j',\id_{j'})$ given by
\begin{equation*}
\vcenter{\xymatrix@C=1em{
&j'\ar[dr]^-{f'}\ar[dl]_-=\\
j'\ar[rr]_-{f'}&&j
}}
\end{equation*}
We can now take advantage of a categorical adjunction: the inclusion of the initial object $(j',\id_{j'})$ is left adjoint to the projection $\pi_{(k/u)}\colon (k/u)\to e$. This means that, upon applying $\D$, we obtain an adjunction $(\pi_{(k/u)}^\ast,(j',\id_{j'})^\ast)$. Because adjoints are unique up to unique isomorphism, we conclude that the limit $\pi_{(k/u),\ast}$ is canonically isomorphic to $(j',\id_{j'})^\ast$. This is the `long proof' of Corollary~\ref{cor:initiallimit} in action.

We obtain the following chain of isomorphisms:
\begin{equation*}
\vcenter{\xymatrix{
k^\ast u_\ast X\ar[r]^-{\cong}& \pi_{(k/u),\ast}\pr^\ast X\ar[r]^-\cong& \pi_{e,\ast}(j',\id_{j'})^\ast \pr^\ast X
}}
\end{equation*}
We have no clear sense of what $\pr^\ast X$ looks like, but we know the compo-\linebreak site~$\pr\circ(j',\id_{j'})\colon e\to~J$ is the inclusion of the object $j'$. Hence $(j',\id_{j'})^\ast \pr^\ast X=j'^\ast X$. Finally, since $\pi_e\colon e\to e$ is just a fancier way of writing $\id_e$, we have
\begin{equation*}
k^\ast u_\ast X\cong \pi_{e,\ast}(j',\id_{j'})^\ast \pr^\ast X= \id_{e,\ast} j'^\ast X\cong j'^\ast X
\end{equation*}
Thus when $k\in K$ is in the image of $u$, the value of $u_\ast X$ at $k$ is exactly what it was in $\D(J)$ under the fully faithful inclusion $u\colon J\to K$.

Combining the above with Proposition~\ref{prop:fullyfaithfulkanextension}, we complete the proof. The case for left derivators and left Kan extensions along cosieves follows by a dual argument.
\end{proof}

We call these \emph{extension by zero functors}, and the corresponding morphisms of derivators $\D^J\to\D^K$ \emph{extension by zero morphisms}. We have the following surprising result:
\begin{prop}[Corollaries~3.5~and~3.8, \cite{Gro13}]\label{prop:stronglypointed}
A (left and right) derivator is pointed if and only if it is strongly pointed.
\end{prop}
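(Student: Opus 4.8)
The plan is to prove the nontrivial direction: a pointed derivator is strongly pointed. So assume $\D$ is a pointed derivator and let $u\colon J\to K$ be a cosieve in $\Dia$ (the sieve case is dual, handled by passing to $\D\op$ or by the same argument with all variances reversed). We must produce a left adjoint $u^?$ to $u_!\colon \D(J)\to \D(K)$; equivalently, since $u_!$ is already known to be fully faithful by Proposition~\ref{prop:extensionbyzero}, we must show that $u_!$ is also a \emph{left} adjoint, i.e. that it preserves limits and admits a left adjoint, or more directly that it has a left adjoint constructed levelwise which assembles to a morphism of derivators. The cleanest route is to identify the essential image of $u_!$ with a reflective (here, coreflective) subcategory of $\D(K)$ and build the adjoint by hand.

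\medskip
\noindent\textbf{Step 1: Factor the cosieve.} Write $K = u(J) \sqcup (K\setminus u(J))$ on objects; because $u$ is a cosieve, there are no maps from $K \setminus u(J)$ into $u(J)$. Let $i\colon K\setminus u(J) \to K$ be the inclusion, which is a sieve (the complement of a cosieve is a sieve). By Proposition~\ref{prop:extensionbyzero} applied to the sieve $i$, the functor $i_\ast\colon \D(K\setminus u(J)) \to \D(K)$ is fully faithful with essential image those $X$ vanishing on $u(J)$; dually $i_!$ is fully faithful with image those $X$ vanishing on $u(J)$ — wait, that is the same description, so the point is rather that in the pointed setting the two full subcategories (objects supported on $u(J)$, objects supported on the complement) are related by a recollement-type structure.

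\medskip
\noindent\textbf{Step 2: Construct $u^?$ via a ``kernel'' construction.} For $Y \in \D(K)$, restrict to the full sub-cosieve: we want $u^? Y$ to be the ``part of $Y$ supported away from the complement.'' Concretely, consider $i^\ast Y \in \D(K\setminus u(J))$ and the object $i_!\,\pi^\ast\pi_\ast\,i^\ast Y$ — no; more simply, use that $i_!$ has a left adjoint candidate only if we already know strong pointedness. The honest approach: since $\D$ is a derivator, the morphism of derivators $u_!\colon \D^J \to \D^K$ is cocontinuous (it is a left adjoint morphism with right adjoint $u^\ast$), hence it preserves \emph{all} colimits. To get a left adjoint to $u_!$ we instead show $u_!$ preserves all \emph{limits} as well, so that by the adjoint functor theorem in the derivator setting (Proposition~2.9 of \cite{Gro13}, the levelwise-adjoint criterion already recalled before Lemma~\ref{prop:colimitpreserving}) it suffices to produce levelwise left adjoints. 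Each $u_{!,L}\colon \D(J\times L) \to \D(K\times L)$ is fully faithful with essential image a full subcategory closed under colimits; one checks using Der4L and the pointedness that this subcategory is also closed under limits, because the limit is computed pointwise via comma categories $(\pi/\ast)$ and the vanishing condition ``$k^\ast X \cong 0$ for $k \notin u(J)$'' is preserved by pointwise limits (a limit of zero objects is zero). A full subcategory of a complete category closed under all limits, under mild hypotheses (here satisfied because $\D(K\times L)$ has all limits that matter and the inclusion creates them), is coreflective; the coreflector is the desired $u^?_L$.

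\medskip
\noindent\textbf{Step 3: Assemble.} Having the levelwise left adjoints $u^?_L$, invoke the dual of the levelwise-adjoint criterion (the statement just before Lemma~\ref{prop:colimitpreserving}, dualized: levelwise left adjoints to a \emph{continuous} morphism assemble): we must check $u_!\colon \D^J \to \D^K$ is continuous, i.e. preserves right Kan extensions. By the dual of Lemma~\ref{prop:colimitpreserving} it suffices to check $u_!$ preserves homotopy limits, i.e. $\pi_{K,\ast}u_! \cong u_!\,\pi_{J,\ast}$ appropriately — but this reduces, via Der4R and the comma-square computations, to the vanishing analysis of Step 2. Then the structure isomorphisms $\gamma_u^{u^?}$ exist by Proposition~\ref{prop:calculusofmates}(3) exactly as in the proof that a cocontinuous $\Phi$ with levelwise right adjoints assembles. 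This yields the morphism of derivators $u^?\colon \D^K \to \D^J$ left adjoint to $u_!$, proving $\D$ is strongly pointed.

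\medskip
\noindent\textbf{Main obstacle.} The delicate point is Step 2: verifying that the essential image of $u_{!,L}$ — the full subcategory of objects vanishing off the cosieve — is closed under \emph{all} limits in $\D(K\times L)$, and that the inclusion admits a coreflector. Closure under limits is believable from ``pointwise limits of zero are zero,'' but making this precise requires care because limits in $\D(K\times L)$ are homotopy limits computed via the comma-category formula of Der4R, not naive pointwise limits, and one must check the comma categories involved interact correctly with the cosieve condition. Producing the coreflector concretely (rather than citing an abstract adjoint functor theorem, which is awkward in the derivator setting) is cleanest done by the explicit formula $u^? Y = \mathrm{fib}\bigl(Y \to i_\ast i^\ast Y\bigr)$ using a cofiber/fiber sequence in the pointed derivator — this is presumably how \cite{Gro13} Corollaries~3.5 and~3.8 proceed, and adapting that argument, checking each map is built from exact squares already shown $\D$-exact in Section~\ref{sec:homotopyexactsquares}, is the real content.
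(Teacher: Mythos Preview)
First, note that the paper does not actually prove the forward direction: after sketching the easy backward implication, it says ``The forwards direction is difficult, but we will not need the details for our main purpose'' and defers entirely to \cite{Gro13}. So there is no in-paper argument to compare yours against.

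Your attempted forward argument has a genuine gap and a concrete error. The gap is in Step~2: showing that the essential image of $u_!$ is closed under limits (which is correct, since each $k^\ast$ is a right adjoint hence continuous, and a limit of zero objects is zero) does not by itself produce a left adjoint to the inclusion. You invoke an unspecified adjoint functor theorem ``under mild hypotheses,'' but none is available at this generality without further work; this is exactly why Groth proceeds by an explicit construction rather than an abstract existence argument. (Minor terminological slip: a full subcategory whose inclusion has a \emph{left} adjoint is \emph{reflective}, not coreflective; reflective subcategories are the ones closed under limits.)

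The concrete error is your proposed formula $u^? Y \simeq u^\ast\,\mathrm{fib}(Y \to i_\ast i^\ast Y)$. Test it on the smallest case $u\colon e \to [1]$, $\ast \mapsto 1$, with complementary sieve $i\colon e \to [1]$, $\ast \mapsto 0$. For $Y = (A \to B) \in \D([1])$ one has $i_\ast i^\ast Y = (A \to 0)$; the unit is $\id_A$ at $0$ and the zero map at $1$, so the fiber has underlying diagram $(0 \to B)$ and your formula returns $B$. But the genuine left adjoint to $u_! X = (0 \to X)$ is $u^?(A \to B) = \mathrm{cofib}(A \to B)$, since a map $(A\to B)\to(0\to X)$ is precisely a map $B \to X$ killing $A$. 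These disagree whenever $A\to B$ is nontrivial (take $A=B$ with the identity: the cofiber is $0$, your formula gives $B$). The actual construction in \cite[\S3]{Gro13}, as the paper later hints when computing $i_{[1]}^!$, passes through an auxiliary enlarged diagram, performs a right extension by zero along a sieve, then a \emph{left} Kan extension (a pushout), and finally restricts---it uses a cofiber-type step, not a fiber, which is also why the paper remarks that building $u^?$ ``requires only left Kan extensions and right Kan extensions along sieves.''
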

The backwards direction is easy: using the exceptional adjoints to the (co)sieve $\varnothing\to e$, we can show that the initial object is also final. The forwards direction is difficult, but we will not need the details for our main theorem.

\section{Half pointed derivators}\label{sec:halfpointed}

To motivate the following definition, we recall the definition of $K_0$ of an abelian category~$\cA$. It is constructed as the free abelian group on (isomorphism classes of) objects $A\in \cA$, written $[A]\in K_0(\cA)$, under the relation that if $0\to A\to B\to C\to 0$ is a short exact sequence, we have $[B]=[A]+[C]$. A short exact sequence is equivalently a cocartesian square
\begin{equation*}
\vcenter{\xymatrix@C=1em@R=1em{
A\ar[r]\ar[d]&B\ar[d]\\
0\ar[r]&C
}}
\end{equation*}
under the assumption that $A\to B$ is a monomorphism. Thus if we are to construct even $K_0$ for a derivator, it needs to admit a notion of (coherent) cocartesian squares and a zero object.

\begin{notn}\label{notn:square}
Let $\square$ be the category
\begin{equation*}
\xymatrix@C=1em@R=1em{
(0,0)\ar[r]\ar[d]&(1,0)\ar[d]\\
(0,1)\ar[r]&(1,1)
}
\end{equation*}
Let $i_{\ul}\colon\ul\to\square$ be the full subcategory lacking the element $(1,1)$.
\end{notn}

\begin{defn}
Let $\D$ be a pointed derivator and $X\in\D(\square)$. We say that $X$ is \emph{cocartesian} (\ie a pushout square) if $X$ is in the essential image of $i_{\ul,!}\colon\D(\ul)\to\D(\square)$.
\end{defn}

In particular, we can construct pushouts appropriate for computing $K_0$ as above by constructing cocartesian squares starting from an element in $\D(\ul)$ of the form
\begin{equation*}
\vcenter{\xymatrix@R=1em@C=1em{
A\ar[r]\ar[d]&B\\
0
}}
\end{equation*}
We will see soon how to construct such objects coherently.

\begin{defn}\label{defn:leftpointedder}
A prederivator $\D\colon\Dia\op\to\CAT$ is a \emph{left pointed derivator} if it is a left derivator, $\D(e)$ is pointed, and for every sieve $u\colon J\to K$, $u^\ast$ admits a right adjoint $u_\ast$.

A prederivator $\D\colon\Dia\op\to\CAT$ is a \emph{right pointed derivator} if it is a right derivator, $\D(e)$ is pointed, and for every cosieve $u\colon J\to K$, $u^\ast$ admits a left adjoint $u_!$.

If we are referring to either a left or a right pointed derivator we will use the general term \emph{half pointed derivator}.
\end{defn}

\begin{remark}
We proved in Corollary~\ref{cor:commaalwaysexact} that the right Kan extensions in a left pointed derivator must satisfy Der4R and the left Kan extensions in a right pointed derivator must satisfy Der4L.
\end{remark}

Left pointed derivators are the important class for derivator K-theory, so we will focus more on them. The existence of the specified $u_\ast$ means that a left pointed derivator $\D$ admits right extension by zero morphisms (recall Proposition~\ref{prop:extensionbyzero}) that can be computed pointwise.

Here is why we need the extension by zero morphisms: suppose we have a coherent map $(f\colon a\to b)\in\D^{[1]}$ and want to compute its cofibre, \ie construct the cocartesian square
\begin{equation}\label{dia:cofibersquare}
\vcenter{\xymatrix@C=1em@R=1em{
a\ar[r]^f\ar[d]&b\ar[d]\\
0\ar[r]&C(f)
}}
\end{equation}
We first have to extend by zero using the functor $i_{[1]}\colon [1]\to\ul$ which includes into the horizontal arrow. But this is a sieve, so the extension by zero morphism is the \emph{right} Kan extension. Thus in an ordinary left derivator, we would not have access to the functor $i_{[1],\ast}\colon \D^{[1]}\to\D^\ul$. After this we may compute the pushout using $i_{\ul,!}\colon\D^{\ul}\to\D^\square$. These two steps yield the above (coherent) cocartesian square
which should be an important part of defining $K_0(\D)$. Without access to the extension by zero morphism $i_{[1],\ast}$ we would be lost from the outset.

But all this discussion begs the question: why not just use an ordinary pointed derivator, which would have all the right Kan extensions we could ever want? There is an important class of examples that are \emph{not} full derivators but are left pointed, which we hinted at in Remark~\ref{rk:dirf}.
\begin{lemma}\label{lemma:waldhausenderivator}
Let $\cW$ be a saturated Waldhausen category satisfying the cylinder axiom. Then the associated prederivator
\begin{equation*}
\D_\cW\colon K\mapsto \Ho(\Fun(K,\cW))
\end{equation*}
defined on $\Dirf$ is a (strong in the sense of Cisinski) left pointed derivator.
\end{lemma}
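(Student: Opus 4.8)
The plan is to verify, in order, the four structural pieces: that $\D_\cW$ is a prederivator, that it is a semiderivator (Der1, Der2), that it is a left derivator (Der3L, Der4L), and finally that $\D_\cW(e) = \Ho\cW$ is pointed and that right Kan extension along every sieve exists. Since the hard analytic content — that $\Ho(\Fun(K,\cW))$ is well-behaved with respect to restriction functors — is Cisinski's theorem, the role of this proof is mainly to organise the citation and explain how the half-derivator axioms follow. I would begin by recalling from \cite{Cis10} (and \cite{Cis06}) the homotopical machinery: $\Fun(K,\cW)$ for $K$ a finite direct category carries a Waldhausen structure with pointwise weak equivalences (here one uses that $K\in\Dirf$ so that diagrams can be built by attaching along latching maps), and the cylinder axiom plus saturation are precisely what is needed for $\Ho(\Fun(K,\cW))$ to be calculated as a category of fractions with a calculus of (co)fractions, making the restriction functors $u^\ast$ well-defined on homotopy categories and strictly 2-functorial. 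That strictness is what upgrades the assignment $K\mapsto\Ho(\Fun(K,\cW))$ to an honest strict 2-functor $\Dirf\op\to\CAT$, i.e.\ a prederivator.

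Next I would check Der1 and Der2. Der1 is essentially formal: $\Fun(\coprod_a K_a,\cW)\cong\prod_a\Fun(K_a,\cW)$ as Waldhausen categories, and passing to homotopy categories preserves this equivalence since weak equivalences and the relevant (co)fibration structure are defined componentwise. Der2 — conservativity of $\dia_K$ — is exactly the statement that a map of diagrams in $\Ho(\Fun(K,\cW))$ which is a pointwise isomorphism in $\Ho\cW$ is an isomorphism; this is built into Cisinski's construction, since the weak equivalences in $\Fun(K,\cW)$ are detected pointwise and a map in a localisation is invertible iff it becomes so after a further conservative localisation. For Der3L and Der4L I would invoke \cite[Lemme~4.3]{Cis10} directly: Cisinski proves that a saturated Waldhausen category with cylinders is a ``catégorie dérivable'' (or at least a left-exact/left-derivable structure), and the main theorem of \cite{Cis10} then produces left homotopy Kan extensions $u_!$ computed pointwise by the comma-category formula, which is Der4L. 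One subtlety to flag: $\Dirf$ is closed under the operations $(u/k)$ appearing in Der4L — this is part of $\Dirf$ being a diagram 2-category (Remark~\ref{rk:dirf}) — so the pointwise formula stays inside the domain. Strongness in Cisinski's sense (Der5') comes along for free from the same source, since finite free categories are finite direct.

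The last and genuinely derivator-theoretic step is the pointed half structure. That $\D_\cW(e)=\Ho\cW$ is pointed follows because a Waldhausen category has a zero object, which is both initial and terminal and remains so after localisation (it is weakly equivalent to nothing extra). For the extension-by-zero morphisms: given a sieve $u\colon J\to K$, I want $u^\ast\colon\D_\cW(K)\to\D_\cW(J)$ to have a right adjoint satisfying Der4R. The idea is the one sketched before Proposition~\ref{prop:extensionbyzero} run in reverse — one does \emph{not} need all right Kan extensions, only those along sieves, and for a sieve the comma categories $(k/u)$ are either empty (giving the value $0$) or have an initial object (giving the value at that object of $J$). Concretely, I would exhibit $u_\ast$ at the level of diagram categories: there is a functor $\Fun(J,\cW)\to\Fun(K,\cW)$ extending a $J$-diagram by the zero object on $K\setminus u(J)$ (possible because $u$ is a sieve, so no arrows point into the new part and the extended assignment is genuinely functorial), this functor is right adjoint to $u^\ast$ on the nose, it preserves weak equivalences, hence descends to homotopy categories, and the descended functor is right adjoint to $u^\ast$ on $\D_\cW$. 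The pointwise formula Der4R then holds by inspection of the two cases above, and in any event Corollary~\ref{cor:commaalwaysexact} makes the Der4R clause automatic once the adjoint exists. The main obstacle is purely expository: faithfully transcribing Cisinski's French ``catégorie dérivable'' hypotheses and matching ``saturated + cylinder axiom'' to exactly the input his theorem needs; the derivator-axiom bookkeeping on top of that is routine.
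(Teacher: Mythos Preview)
The paper does not actually supply a proof of this lemma: it is stated with attribution to \cite[Lemme~4.3]{Cis10} and then used as a black box. So there is nothing to compare against in the strict sense. Your proposal is a correct expansion of what such a proof would look like --- organise the citation to Cisinski for the left derivator axioms (Der1--Der4L, Der5') on $\Dirf$, then verify by hand the two extra ingredients in Definition~\ref{defn:leftpointedder}: pointedness of $\Ho\cW$ and existence of $u_\ast$ for sieves $u$ via the explicit extension-by-zero functor on diagram categories. Your observation that Der4R is then automatic by Corollary~\ref{cor:commaalwaysexact} matches exactly the Remark the paper makes immediately after Definition~\ref{defn:leftpointedder}.

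One small caution on the sieve step: you say the extension-by-zero functor $\Fun(J,\cW)\to\Fun(K,\cW)$ is right adjoint to $u^\ast$ ``on the nose'' at the level of diagram categories. That is not quite true in a general Waldhausen category, since $0$ is only initial (not terminal) in $\cW$ itself; the adjunction genuinely lives at the homotopy level, where the zero object becomes two-sided. Your argument still goes through because the functor preserves pointwise weak equivalences and the counit/unit become isomorphisms in $\Ho$, but the phrase ``right adjoint on the nose'' overstates it. Otherwise the sketch is sound.
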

\begin{proof}
This follows by combining Exemple~2.23, Corollaire~2.24, and Lemme~4.3 of \cite{Cis10}. Note that for Cisinski, anything \emph{\`a droite} is what we would call \emph{left}, \eg \cite[D\'efinition~A.6]{Cis10} defining a \emph{d\'erivateur faible \`a droite} matches our Definition~\ref{defn:leftderivator} defining a left derivator.
\end{proof}

Recall that $\Dirf$ is the diagram 2-category of finite direct categories. These derivators $\D_\cW$ cannot be full derivators in general because arbitrary Waldhausen categories admit no notion of product or pullback square, although Waldhausen categories whose structure descends from exact or abelian categories do. To challenge our intuition, recall that in \cite[\S1.7]{Wal85} Waldhausen defines for any Waldhausen category $\cW$ and (co)homology theory on $\cW$ the subcategory of $n$-spherical objects, \ie objects which (co)homology concentrated in degree $n$. These categories are still Waldhausen but do not admit products, even when $\cW$ is an abelian category, as the (co)homology of a product need not remain in degree $n$. Waldhausen uses these cellular filtrations to prove various theorems in the rest of Chapter~1 of \cite{Wal85}. As an analogous example, categories of chain complexes valued in an abelian or exact category with bounded (co)homology are Waldhausen categories that do not admit all (fibre) products. 

Investigating derivator K-theory as an extension of Waldhausen K-theory is only possible if we are as general as possible with the input to derivator K-theory.

\begin{remark}
A note about historical definitions is in order here. Heller in~\cite{Hel88} defines a `left homotopy theory' to be (using modern terminology) a left derivator such that every discrete fibration $p\colon E\to B$ admits a right Kan extension along with a strange condition on the underlying diagram functors $\dia_E,\dia_B$ (his axiom H4L). In particular, this implies that left homotopy theories admit finite products, a situation we do not wish to replicate. But because sieves are a particular kind of discrete fibration, one in which the fibres are either a singleton or empty, one can view our definition as a weakening of Heller's (with the addition of `pointed').

On the other hand, in the reformulation of derivator K-theory in~\cite{MurRap17}, the authors define derivator K-theory on all `pointed right derivators', which in our terms (as we warned in Remark~\ref{rk:halfderivator}) is a left derivator $\D$ such that $\D(e)$ is pointed. For us, this is not enough structure. The cocartesian squares of the form of Diagram~\ref{dia:cofibersquare} still exist in such a derivator, as they are identifiable as the essential image of the pushout functor $i_{\ul,!}\colon \D(\ul)\to\D(\square)$ satisfying $(0,1)^\ast X=0\in\D(e)$. However, they are impossible to construct given just the information of $(f\colon a\to b)\in\D^{[1]}$. 

We choose the order of our adjectives to emphasize that $\D$ is half of a pointed derivator, not a half derivator that happens to be pointed (as in~\cite{MurRap17} and Proposition~\ref{prop:extensionbyzero}). In particular, a Muro-Raptis `pointed right derivator' may not be strongly pointed, as the construction of exceptional left adjoints to left extension by zero morphisms requires particular right extension by zero morphisms to exist. However, our left pointed derivators do not have this (potential) defect, as the following theorem shows.
\end{remark}

\begin{theorem}\label{thm:halfstronglypointed}
A half pointed derivator $\D$ is half strongly pointed. That is, for a half pointed derivator $\D$, each category $\D(K)$ is pointed and $u^\ast$ is a pointed functor for all functors $u$.

Moreover, in a left pointed derivator, every $u_!$ is pointed functor and left Kan extensions along cosieves admit exceptional left adjoints. In a right pointed derivator, every $u_\ast$ is a pointed functor and right Kan extensions along sieves admit exceptional right adjoints.
\end{theorem}
\begin{proof}
We begin by showing that each $\D(K)$ of a half pointed derivator is pointed. The map $i_\varnothing\colon\varnothing\to K$ is both a sieve and a cosieve, so the\linebreak functors $i_{\varnothing,\ast},i_{\varnothing,!}\colon\D(\varnothing)\to \D(K)$ defining the initial and final object in $\D(K)$ always exist for any half pointed derivator. Moreover, they are isomorphic in any prederivator satisfying Der2 with pointed base: there is a canonical transformation $i_{\varnothing,\ast}\Rightarrow i_{\varnothing,!}$ which is an isomorphism when whiskered with $k^\ast$ for every $k\in K$. We have an isomorphism $k^\ast i_{\varnothing,!}\cong \pi_{\varnothing,!}$ where $\pi_\varnothing\colon\varnothing\to e$ is the empty inclusion/projection into the final category, and similarly $k^\ast i_{\varnothing,\ast}\cong \pi_{\varnothing,\ast}$. Because $\D(e)$ is pointed, the canonical natural transformation $\pi_{\varnothing,!}\Rightarrow \pi_{\varnothing,\ast}$ is an isomorphism, hence the same thing is true for $k^\ast i_{\varnothing,\ast}\Rightarrow k^\ast i_{\varnothing,!}$

By Der2, this means that initial objects in $\D(K)$ are also final, making $\D(K)$ a pointed category. Left and right Kan extensions preserve final and initial objects (respectively), so they also preserve zero objects. Pullback functors preserve either initial or final objects depending on whether $\D$ is a left or right pointed derivator, but in either case they preserve zero objects as well.

Finally, the construction of the exceptional left adjoint to a cosieve requires only left Kan extensions and right Kan extensions along sieves. Therefore the same construction still works in a left pointed derivator. For more details in both the left and right cases see~\cite[\S3]{Gro13} and Corollary~3.8 in particular.
\end{proof}

\begin{remark}
We might hope that in a left pointed derivator, right Kan extensions along sieves would still admit exceptional right adjoints. There does not seem any reason for this to be true. To give an explicit counterexample, consider the sieve $i_{[1]}\colon[1]\to\ul$ and consider an object $\D(\ul)$ given (incoherently) by
\begin{equation*}
\vcenter{\xymatrix@R=1em@C=1em{
&a\ar[r]\ar[dl]&b\\
c
}}
\end{equation*}
Then the formula for $i_{[1]}^!$ is the following composition:
\begin{equation*}
\vcenter{\xymatrix@R=0.5em@C=1em{
&a\ar[r]\ar[dl]&b\\
c
}}\mapsto
\vcenter{\xymatrix@R=0.5em@C=1em{
0\ar[dd]\\
&a\ar[r]\ar[dl]&b\\
c
}}\mapsto
\vcenter{\xymatrix@R=0.5em@C=1em{
&P\ar[r]\ar[dd]\ar[dl]&b\ar[dd]^=\\
0\ar[dd]\\
&a\ar[r]\ar[dl]&b\\
c
}}\mapsto
\vcenter{\xymatrix@R=0.5em@C=1em{
P\ar[r]&b
}}
\end{equation*}
The lefthand (slanted) square in the third step is cartesian, \ie $P$ is the pullback of $a\to c$ along zero. But in an arbitrary left pointed derivator, this pullback has no reason to exist. The details of this construction can be found in \cite[pp.62-64]{Col19b}.
\end{remark}

Having established the objects we will use as input to derivator K-theory, we now need to discuss the morphisms. Such morphisms will need to preserve cocartesian squares and the zero object, so that they preserve objects like Diagram~\ref{dia:cofibersquare}. The common adjective for such morphisms is \emph{right exact}, which will remain (for the moment) an example of the left/right notational hazard.

Any cocontinuous morphism will be right exact, \ie pullbacks of functors $u\colon J\to~K$ in $\Dia$ and the associated left Kan extensions. But we no longer have exceptional right adjoints to our right Kan extensions along sieves, so these morphisms are not left adjoints and hence not automatically cocontinuous. Fortunately, we have the following.

\begin{theorem}\label{thm:extbyzerococ}
Let $\D$ be a left pointed derivator. Then for any sieve $i\colon A\to B$, $i_\ast\colon \D^A\to\D^B$ is a cocontinuous morphism of derivators.
\end{theorem}
\begin{proof}
It is enough to show that for any $K\in\Dia$, $i_\ast$ preserves colimits of shape $K$ by Proposition~\ref{prop:colimitpreserving}. Specifically, if we let $\pi_K\colon K\to e$ be the projection, we need the canonical comparison map
\begin{equation*}
(\id_B\times\pi_K)_!(i\times\id_K)_\ast X\to i_\ast(\id_A\times\pi_K)_!X
\end{equation*}
to be an isomorphism for any $X\in\D(A\times K)$. It suffices to check that this map is an isomorphism at every $b\in B$ by Der2, so therefore we consider
\begin{equation}\label{eq:cantrans}
b^\ast(\id_B\times\pi_K)_!(i\times\id_K)_\ast X\to b^\ast i_\ast(\id_A\times\pi_K)_! X
\end{equation}
for any $X\in\D(A\times K)$.

We begin by analyzing the lefthand side. Even though $b^\ast$ does not necessarily admit a right adjoint, it is still cocontinuous by~\cite[Proposition~2.5]{Gro13}. That proposition is stated for full derivators, but its proof only depends on Theorem~\ref{thm:commaexact}, which we have shown is satisfied for half derivators.  Therefore
\begin{equation*}
b^\ast(\id_B\times\pi_K)_!(i\times\id_J)_\ast X\cong \pi_{K,!} (b\times \id_J)^\ast (i\times \id_J)_\ast X.
\end{equation*}
Using the properties of the extension by zero morphism $i_\ast$ from Proposition~\ref{prop:extensionbyzero}, we have that
\begin{equation*}
\pi_{K,!}(b\times \id_K)^\ast (i\times \id_K)_\ast X\cong\begin{cases} \pi_{K,!}(b\times\id_K)^\ast X&b\in A\\0&b\notin A\end{cases}
\end{equation*}

For the righthand side of Equation~\ref{eq:cantrans}, again using that $i_\ast$ is extension by zero,
\begin{equation*}
b^\ast i_\ast(\id_A\times\pi_K)_! X\cong\begin{cases}b^\ast (\id_A\times\pi_K)_! X&b\in A\\0&b\notin A\end{cases}
\end{equation*}

So if $b\notin A$, the canonical transformation in Equation~\ref{eq:cantrans} is an isomorphism of zero objects. If $b\in A$, then the transformation is isomorphic to
\begin{equation*}
\pi_{K,!}(b\times \id_K)^\ast X\to~b^\ast(\id_A\times \pi_K)_! X
\end{equation*}
which is still an isomorphism because the square
\begin{equation*}
\vcenter{\xymatrix@C=3em{
K\ar[r]^-{b\times\id_K}\ar[d]_-{\pi_K}&B\times K\ar[d]^-{\id_B\!\times \pi_K}\ar@{}[dl]|\swtrans\ar@{}[dl]<-1.25ex>|\id\\
e\ar[r]_-b&B
}}
\end{equation*}
is homotopy exact. Specifically, it is the strict pullback along the Grothendieck opfibration $B\times K\to K$, so is homotopy exact by Proposition~\ref{prop:grothendieckexactsquare}.
\end{proof}

\begin{remark}
Groth in~\cite[Proposition~3.7]{Gro16b} proves out that a derivator is pointed if and only if left Kan extensions commute with right Kan extensions along sieves if and only if right Kan extensions commute with left Kan extensions along cosieves (among other equivalent conditions). We have recovered this result for half derivators as well, or rather, we have recovered the two-thirds of the result that makes sense for each type of half derivator.
\end{remark}

As another ingredient, we would like to know that extension by zero functors commute with cocontinuous morphisms of derivators. In fact, extension by zero functors commute with merely pointed morphisms of derivators, \ie morphisms of prederivators that additionally send zero to zero. This is proven for (two-sided) pointed derivators at~\cite[Corollary~8.2]{Gro16a}, and we give an explicit proof to demonstrate that it still holds for half pointed derivators.

\begin{theorem}\label{thm:cocpreszero}
Suppose that $\Phi\colon\D\to\E$ is a pointed morphism of half pointed derivators. Then $\Phi$ commutes with extension by zero functors (along both sieves and cosieves).
\end{theorem}
\begin{proof}
Let $u\colon J\to K$ be a cosieve, so that $u_!$ is extension by zero. Then we have a canonical transformation
\begin{equation*}
\vcenter{\xymatrix@C=4em{
\D(J)\ar[r]^{u_!}\ar[d]_{\Phi_J}&\D(K)\ar[d]^{\Phi_K}\ar@{}[dl]|\netrans\ar@{}[dl]<-1.8ex>|(.575){\gamma_{u,!}^\Phi}\\
\E(J)\ar[r]_{u_!}&\E(K)
}}
\end{equation*}
which we will prove is an isomorphism. To do so, we paste onto that square: for any $k\in K$, we obtain
\begin{equation*}
\vcenter{\xymatrix@C=4em{
\D(J)\ar[r]^{u_!}\ar[d]_{\Phi_J}&\D(K)\ar[d]^{\Phi_K}\ar@{}[dl]|\netrans\ar@{}[dl]<-1.8ex>|(.575){\gamma_{u,!}^\Phi}\ar[r]^{k^\ast}&\D(e)\ar[d]^{\Phi_e}\ar@{}[dl]<-1.25ex>|(.55)\cong\ar@{}[dl]|\netrans\\
\E(J)\ar[r]_{u_!}&\E(K)\ar[r]_{k^\ast}&\E(e)
}}
\end{equation*}
The added square commutes up to isomorphism because $\Phi$ is a morphism of prederivators. The total pasting depends on whether $k\in J$ or $k\in K\setminus J$. In the first case, $k^\ast u_!\cong k^\ast$, so the total pasting is isomorphic to
\begin{equation*}
\vcenter{\xymatrix@C=4em{
\D(J)\ar[r]^{k^\ast}\ar[d]_{\Phi_J}&\D(e)\ar[d]^{\Phi_e}\ar@{}[dl]<-1.25ex>|(.55)\cong\ar@{}[dl]|\netrans\\
\E(J)\ar[r]_{k^\ast}&\E(e)
}}
\end{equation*}
which is an isomorphism (again because $\Phi$ is a morphism of prederivators). This gives us a composition
\begin{equation*}
\vcenter{\xymatrix@C=4em{
k^\ast u_! \Phi_J\ar@(u,u)[rr]^-{\cong}\ar[r]_-{k^\ast\gamma_{u,!}^\Phi}&k^\ast\Phi_Ku_!\ar[r]_-{\cong}&\Phi_ek^\ast u_!
}}
\end{equation*}
so that $k^\ast\gamma_{u,!}^\Phi$ is an isomorphism for any $k\in J$.

In the second case, $k^\ast u_!= 0$. Specifically, using Der4, we have an isomorphism of functors $k^\ast u_!\cong \pi_{(k/u),!}\pr^\ast$. This yields an isomorphism of pastings
\begin{equation*}
\vcenter{\xymatrix@C=4em{
\D(J)\ar[r]^{u_!}\ar[d]_{\Phi_J}&\D(K)\ar[d]^{\Phi_K}\ar@{}[dl]|\netrans\ar@{}[dl]<-1.8ex>|(.575){\gamma_{u,!}^\Phi}\ar[r]^{k^\ast}&\D(e)\ar[d]^{\Phi_e}\ar@{}[dl]<-1.25ex>|(.55)\cong\ar@{}[dl]|\netrans\\
\E(J)\ar[r]_{u_!}&\E(K)\ar[r]_{k^\ast}&\E(e)
}}
\quad\cong\quad
\vcenter{\xymatrix@C=4em{
\D(J)\ar[r]^{\pr^\ast}\ar[d]_{\Phi_J}&\D(\varnothing)\ar[d]^{\Phi_\varnothing}\ar@{}[dl]|\netrans\ar[r]^{\pi_{\varnothing,!}}&\D(e)\ar[d]^{\Phi_e}\ar@{}[dl]|\netrans\\
\E(J)\ar[r]_{\pr^\ast}&\E(\varnothing)\ar[r]_{\pi_{\varnothing,!}}&\E(e)
}}
\end{equation*}
where we identify the comma category $(k/u)=\varnothing$ when $k\in K\setminus J$ because the functor $u$ is a cosieve. For this righthand diagram, the left square commutes up to isomorphism because $\Phi$ is a morphism of prederivators, and the right square does as well because $\Phi$ is pointed. Thus $k^\ast\gamma_{u_!}^\Phi$ is an isomorphism for $k\in K\setminus J$.

Putting these together, we complete the proof for extensions by zero along cosieves. For extensions by zero along sieves, the proof is the dual of the above. 
\end{proof}

We obtain the following definition, which perfectly generalises the key example of Lemma~\ref{lemma:waldhausenderivator} and sets up the optimal domain for derivator K-theory.
\begin{defn}
Let $\mathbf{Der}_K$ be the 2-category defined as follows: the objects are (strong) left pointed derivators defined on $\Dirf$, the 1-morphisms are cocontinuous morphisms of derivators, and the 2-morphisms are isomodifications.

For any $\D\in\Der_K$ and any $K\in\Dirf$, the prederivator $\D^K$ is in $\Der_K$. For any functor $u\colon J\to K$, the morphisms $u^\ast\colon \D^K\to\D^J$ and~$u_!\colon \D^J\to\D^K$ are in $\Der_K$. For any sieve $i\colon A\to B$, the morphism $i_\ast\colon \D^A\to\D^B$ is in $\Der_K$. 
\end{defn}

An exact functor between Waldhausen categories induces a cocontinuous morphism of derivators by \cite[Lemme~4.3]{Cis10} again. Therefore there is a 2-functor $\mathbf{WaldCat}\to\mathbf{Der}_K$ from the 2-category of saturated Waldhausen categories satisfying the cylinder axiom and exact morphisms (and trivial 2-morphisms). This allows us to compute the derivator K-theory of any such Waldhausen category.


\bibliography{master-bibliography}
\bibliographystyle{alpha}

\end{document}